\newtheorem{theorem}{Theorem}[section]
\newtheorem{definition}[theorem]{Definition}
\newtheorem{example}[theorem]{Example}
\newtheorem{lemma}[theorem]{Lemma}
\newtheorem{remark}[theorem]{Remark}
\newenvironment{proof}[1][Proof]{\noindent\textbf{#1.} }{\ \rule{0.5em}{0.5em}}
\begin{document}

\title{\bf Exchangeable Hoeffding decompositions over finite sets: a combinatorial characterization and counterexamples}
\author{Omar EL-DAKKAK, Giovanni PECCATI and Igor PR\"{U}NSTER \\
%EndAName
Universit\'{e} Paris-Ouest, Nanterre, La D\'{e}fense,\\
Universit\'{e} du Luxembourg and\\
Universit\`{a} degli Studi di Torino}
\date{}
\maketitle

\begin{abstract} We study Hoeffding decomposable exchangeable sequences with values in a finite set $D=\{d_{1},\ldots,d_{K}\}$. We provide a new combinatorial characterization of Hoeffding decomposability and use this result to show that, for every $K\geq 3$, there exists a class of neither P\'{o}lya nor i.i.d. $D$-valued exchangeable sequences that are Hoeffding decomposable. The construction of such sequences is based on some ideas appearing in Hill, Lane and Sudderth [1987] and answers a question left open in El-Dakkak and Peccati [2008].

%Motivated by this result, and in pursuance of a characterization of i.i.d. and $K$-color P\`{o}lya urn processes in terms of a Hoeffding decomposability-type condition, we introduce the notion of an {\it additively Hoeffding decomposable} exchangeable sequence, and show that a non-deterministic exchangeable urn process with values in finite set with more than 3 elements is additively Hoeffding decomposable if and only if it is either P\'olya or i.i.d.. Our findings should be considered as a contribution to a circle of ideas started in Peccati [2004], linking the fundamental notions of exchangeability, P\'olya sequence, urn process, weak independence and orthogonal Hoeffding decompositions.

\noindent{\bf Key words and phrases:} Exchangeability; Hoeffding Decompositions; P\'olya Urns; Urn Sequences; Weak Independence.

\noindent{\bf AMS 2000 Classification:} 60G09, 60G99.

\end{abstract}

\section{Introduction and framework}
\subsection{Overview}\label{ss:ovw}

Let $\mathbf{X}_{[1,\infty)}:=\{X_n:n\geq 1\}$ be a sequence of square-integrable random variables with values in some Polish space. We say that $\mathbf{X}_{[1,\infty)}$ is \textit{Hoeffding-decomposable} if every square-integrable symmetric statistic of any $n$-subvector of $\mathbf{X}_{[1,\infty)}$, for every $n\geq 2$, can be uniquely represented as an orthogonal sum of $n$ $U$-statistics with degenerate kernels of increasing order. The classic notion of `degeneracy' that is needed in this context is formally introduced in formula (\ref{comp-degen}) below.

%\medskip

Since their discovery in the landmark paper by Hoeffding [1948], Hoeffding decompositions in the case of i.i.d. sequences have been successfully applied in a variety of frameworks, e.g.: linear rank statistics (Hajek $[1968]$), jackknife estimators (Karlin and Rinott $[1982]$), covariance analysis of symmetric statistics (Vitale $[1992]$), convergence of $U$-processes (Arcones and Gin\'{e} $[1993]$), asymptotic problems in geometric probability (Avram and Bertsimas [1993]), Edgeworth expansions (Bentkus, G\"{o}tze and van Zwet $[1997]$), and tail estimates for $U$-statistics (Major $[2005]$). See also Koroljuk and Borovskich [1994] and references therein.

%\medskip

Outside the i.i.d. framework, Hoeffding decompositions have been notably applied to study sampling without replacement from finite populations. The first analysis in this direction can be found in Zhao and Chen $[1990]$. Bloznelis and G\"{o}tze $[2001, 2002]$ generalized these results in order to characterize the asymptotic normality of symmetric statistics based on sampling without replacement (when the size of the population diverges to infinity), as well as to obtain explicit Edgeworth expansions. In Bloznelis $[2005]$, Hoeffding-type decompositions are explicitly computed for statistics depending on extractions without replacement from several distinct populations.

%\medskip

In Peccati $[2003, 2004, 2008]$ the theory of Hoeffding decompositions was extended to the framework of general exchangeable (infinitely extendible) random sequences. In Peccati $[2004]$ it was shown that the class of Hoeffding decomposable exchangeable sequences coincides with the collection of weakly independent sequences, and that the class of weakly independent (and, therefore, Hoeffding decomposable) sequences contains the family of generalized P\'{o}lya urn sequences (see, e.g., Blackwell and MacQueen $[1973]$ or Pitman $[1996]$). The connection with P\'{o}lya urns was further exploited in Peccati $[2008]$, where Hoeffding-type decompositions were used in order to establish several new spectral properties of Ferguson-Dirichlet processes (Ferguson $[1973]$), such as for instance a chaotic representation property.

%\medskip

In El-Dakkak and Peccati $[2008],$ the results established in Peccati $[2004]$ were enriched and completed in two directions. On the one hand, it was proved that a (non deterministic) infinite exchangeable sequence with values in $\{0,1\}$ is Hoeffding decomposable if and only if it is either a P\'{o}lya sequence or i.i.d.. This result connects {\it de facto} the seemingly unrelated notions of a Hoeffding decomposable sequence and of a {\it urn process}, a concept thoroughly studied in Hill, Lane and Sudderth $[1987]$. For the sake of completeness, it is worth recalling that, following Hill, Lane and Sudderth [1987], an exchangeable sequence $\mathbf{X}_{[1,\infty)}$ will be termed {\it deterministic} if $\mathbb{P}[X_k = X_1,\, \forall k\geq 2] = 1$. On the other hand, and using different techniques, a partial characterization of Hoeffding decomposable exchangeable sequences with values in a finite set with more than two elements was obtained. While not being as exhaustive as the one in the two-color case, this characterization was used to prove that P\'{o}lya urns are the only Hoeffding decomposable sequences within a large class of exchangeable sequences. Such a family of exchangeable sequences is defined in terms of their directing (or de Finetti) measure, which is obtained by normalizing vectors of infinitely divisible (positive) independent random variables (see Regazzini, Lijoi and Pr\"unster [2003] and James, Lijoi and Pr\"unster [2006]). See Lijoi and Pr\"unster (2010) for an overview of their use in Bayesian Statistics.

\medskip

\noindent Therefore, the analysis carried out in El-Dakkak and Peccati [2008] left the following question unanswered:
\begin{align*}
\hbox{{\bf Problem A:}} \quad  & \hbox{{\sl Are P\'olya and i.i.d. sequences the only infinite non deterministic Hoeffding}}\\
 & \hbox{{\sl decomposable sequences with values in a finite set with $\geq 3$ elements ?}}
\end{align*}

\noindent We shall give a negative answer to Problem A. This is surprising given the above mentioned positive characterization might somehow lead to conjecture the opposite and, hence, makes the present result even more remarkable. In fact, the negative answer is obtained by explicitly building a class of {\it neither P\'olya nor i.i.d.} yet Hoeffding decomposable exchangeable sequences with values in a finite set with strictly more than two elements. See Theorem \ref{t:main} below for a precise statement. Interestingly, this class turns out to be a generalization of a counterexample appearing in Hill, Lane and Sudderth [1987, p. 1591], showing that, unlike in the two-color case (see Theorem \ref{t:mainhls}),  there exist non-deterministic exchangeable $3$-color urn processes that are neither P\'olya nor i.i.d. sequences. Our fundamental tool is a new combinatorial characterization, stated in Theorem  \ref{omaraus_K}, of the system of predictive probabilities associated with Hoeffding-decomposable exchangeable sequences taking values in an arbitrary finite set. This characterization, which is of independent interest, represents a generalization of a crucial combinatorial statement proved in Proposition 4 of El-Dakkak and Peccati $[2008]$.

%\bigskip
%\noindent Theorem \ref{t:main} could be seen as a further corroboration of the close link between the notions of Hoeffding decomposability and urn processes, a link which was transparent, for the two color case, in El-Dakkak and Peccati [2008]. Motivated by this fact, in the second part of the present paper, we study the following problem:
%\begin{quote}
%{\bf Problem B:} {\sl Is it possible to characterize i.i.d. and $K$-color P\'olya urn processes, $K\geq3$, in terms of some (possibly enhanced) notion of Hoeffding decomposability ?}
%\end{quote}
%\noindent We shall give a positive answer to Problem B. We will, in fact, introduce a new notion of Hoeffding decomposability (called \textit{additive Hoeffding decomposability} -- see Definition \ref{def:fullyHD}) and show that a non deterministic exchangeable urn process with values in some finite set $D=\{d_{1},\ldots,d_{K}\}$, $K\geq3$, is additively Hoeffding decomposable if and only if it is either i.i.d. or a $K$-color P\'{o}lya urn process.
%\medskip

In what follows, we recall some well-known facts about the main notions used in the sequel, namely: exchangeability (Section \ref{ss:ex}), Hoeffding decomposability  (Section \ref{ss:hd}), weak independence  (Section \ref{ss:wi}) and urn processes  (Section \ref{ss:up}).

%\medskip

For further details about exchangeability, urn processes and Hoeffding decompositons, the reader is referred e.g. to Aldous $[1983]$, Hill, Lane and Sudderth $[1987]$, Peccati $[2004]$  and  El-Dakkak and Peccati $[2008]$.

\begin{remark}{\rm
Every exchangeable sequence $\{X_1, X_2,...\}$ considered in this paper is assumed to take values in some finite set $D$. In particular, every random variable of the type $F = \varphi(X_1,...,X_n)$, $n\geq 1$, is automatically bounded.
}
\end{remark}

\subsection{Exchangeability}\label{ss:ex}

For every $n\geq 2$, we denote by $\mathfrak{S}_{n}$
the group of all permutations of the set $\left[ n\right] =\left\{
1,...,n\right\} .$ A vector $\left( X_{1},...,X_{n}\right) $ of $D$-valued
random variables is said to be \textit{exchangeable} if, for all $\mathbf{x}%
_{n}=\left( x_{1},...,x_{n}\right) \in D^{n}$ and all $\pi \in \mathfrak{S}%
_{n},$%
\begin{equation*}
\mathbb{P}\left( X_{1}=x_{1},...,X_{n}=x_{n}\right) =\mathbb{P}\left(
X_{1}=x_{\pi \left( 1\right) },...,X_{n}=x_{\pi \left( n\right) }\right) .
\end{equation*}%
A $D$-valued infinite sequence $\mathbf{X}_{\left[ 1,\infty \right) }$ is exchangeable
if every $n$-subvector of $\mathbf{X}_{\left[ 1,\infty \right) }$ is
exchangeable. Let $\Pi _{D}$ denote the set of all probability measures
on $D$. By the de Finetti representation theorem (see Aldous $[1983]$), an infinite sequence $%
\mathbf{X}_{\left[ 1,\infty \right) }$ with values in $D=\left\{
d_{1},...,d_{K}\right\} $ is exchangeable
if and only if there exists a unique probability measure $\gamma $ on $\Pi _{D}$
(called {\it directing} or the {\it de Finetti measure} associated with the sequence $\mathbf{X}_{\left[
1,\infty \right) }$) such that, for all $n\geq 2$ and all $\left( x_{1},...,x_{n}\right)
\in D^{n},$%
\begin{equation}
\mathbb{P}\left( X_{1}=x_{1},...,X_{n}=x_{n}\right) =\int_{\Pi
_{D}}\prod\limits_{j=1}^{n}p\left\{ x_{j}\right\} \gamma \left( \text{d}%
p\right) ,  \label{definetti2}
\end{equation}%
where the elements of $\Pi _{D}$ are written in the form $p:= \left\{ p\left\{
d_{i}\right\} :i=1,...,K\right\} .$ In other words, the de Finetti
representation theorem states that a sequence of random variables is
exchangeable if and only if it is a mixture of i.i.d. random sequences
with values in $D.$

\medskip

Plainly, any probability measure $p\in \Pi_D$ can be parametrized
in terms of the simplex $$\Theta _{K-1}:= \left\{ \left( \theta
_{1},...,\theta _{K-1}\right) :\theta _{h}\geq 0,\text{ }h=1,...,K-1\text{ and }%
\sum_{h=1}^{K-1}\theta _{h}\leq 1\right\} ,$$ by setting $p\left\{
d_{1}\right\} =\theta _{1},...,p\left\{ d_{K-1}\right\} =\theta _{K-1}$ and $%
p\left\{ d_{K}\right\} =1-\sum_{h=1}^{K-1}\theta _{h}.$ With this convention, the representation in (\ref{definetti2}) can be rewritten as

\begin{equation}
\mathbb{P}\left( X_{1}=x_{1},...,X_{n}=x_{n}\right) =\int_{\Theta _{K-1}}\left( \Pi
_{j=1}^{K-1}\theta _{j}^{i_{j}}\right) \left( 1-\Sigma _{h=1}^{K-1}\theta
_{h}\right) ^{i_{K}}\gamma \left( \text{d}\theta _{1},...,\text{d}\theta
_{K-1}\right) ,  \label{definettibis2}
\end{equation}%
where (with an abuse of notation) we have identified $\gamma $ with its image through the canonical bijection between $\Pi_D$ and $\Theta _{K-1}$, and where $i_{j}:=
\sum_{v=1}^{n}\mathbf{1}\left( x_{v}=d_{j}\right) ,$ $j=1,...,K.$ Note that, when $K=2$,  (\ref{definettibis2}) becomes%
\begin{equation}\label{amsterdam}
\mathbb{P}\left( X_{1}=x_{1},...,X_{n}=x_{n}\right) =\int_{\left[ 0,1\right] }\theta
^{i}\left( 1-\theta \right) ^{n-i}\gamma \left( \text{d}\theta \right) ,
\end{equation}%
where $i=\sum_{v=1}^{n}\mathbf{1}\left( x_{v}=d_{1}\right) .$

\medskip

If there exists a vector $\alpha =\left( \alpha _{1},...,\alpha _{K}\right)
\in \left( 0,+\infty \right) ^{K}$ of strictly positive numbers such that
\begin{equation}
\gamma(\text{d}\theta_{1},\ldots,\text{d}\theta_{K-1}) =\frac{1}{B\left( \alpha \right) }\left( \Pi _{j=1}^{K-1}\theta
_{j}^{\alpha _{j}-1}\right) \left( 1-\Sigma _{h=1}^{K-1}\theta _{h}\right)
^{\alpha _{K}-1}\text{d}\theta _{1}\cdots\text{ d}\theta _{K-1},
\label{for:dirichletbis}
\end{equation}%
where $B\left( \alpha \right) := \Pi _{j=1}^{K}\Gamma \left( \alpha
_{j}\right) /\Gamma \left( \Sigma _{j=1}^{K}\alpha _{j}\right),$ and $%
\Gamma \left( \cdot \right) $ stands for the usual Gamma function, we say that $%
\gamma $ is a \textit{Dirichlet probability measure} and that $\mathbf{X}_{%
\left[ 1,\infty \right) }$ is a \textit{$K$-color P\'{o}lya sequence} with parameter $\alpha$. Specializing (\ref{for:dirichletbis}) to the case $K=2,$ one sees
immediately that the measure $\gamma $ in (\ref{amsterdam}) becomes a Beta distribution with parameters $\alpha_1,\, \alpha_2$. It follows that
an exchangeable sequence $\mathbf{X}_{\left[ 1,\infty \right) }$ is a
{two-color P\'{o}lya sequence} if and only if its de Finetti measure is a Beta
distribution.

\subsection{Hoeffding decomposability}\label{ss:hd}

Let us first introduce some notation.
For all $n\geq 1$ and all $1\leq u\leq n,$ we write $\left[ n\right]
=\left\{ 1,...,n\right\} $ and $\left[ u,n\right] =\left\{
u,u+1,...,n\right\} ,$ and set $\mathbf{X}_{\left[ n\right] }:=
\left( X_{1},X_{2},...,X_{n}\right) $ and $\mathbf{X}_{\left[ u,n\right]
}:= \left( X_{u},X_{u+1},...,X_{n}\right) .$ As in El-Dakkak and
Peccati $\left[ 2008\right] ,$ define, for all $n\geq 2,$ the sequence of
spaces%
\begin{equation*}
\left\{ SU_{k}\left( \mathbf{X}_{\left[ n\right] }\right) :k=0,...,n\right\}
,
\end{equation*}%
generated by symmetric $U$-statistics of increasing order, as follows: $%
SU_{0}\left( \mathbf{X}_{\left[ n\right] }\right) := \mathfrak{R}$
and, for all $k=1,...,n,$ $SU_{k}\left( \mathbf{X}_{\left[ n\right] }\right)
$ is the collection of all random variables of the type%
\begin{equation}
F\left( \mathbf{X}_{\left[ n\right] }\right) =\sum_{1\leq j_{1}<\cdot \cdot
\cdot <j_{k}\leq n}\varphi \left( X_{j_{1}},...,X_{j_{k}}\right) ,
\label{u-stat}
\end{equation}%
where $\varphi $ is a real-valued symmetric function from $D^{k}$ into $%
\mathfrak{R}.$ Any random variable $F$ as in (\ref{u-stat}) is called a $U$%
\textit{-statistic with symmetric kernel of order }$k.$ Since the collection $%
\left\{ SU_{k}\left( \mathbf{X}_{\left[ n\right] }\right)\right\}
$ is an increasing sequence of vector spaces such that $SU_{n}\left( \mathbf{X}_{%
\left[ n\right] }\right) =L_{s}\left( \mathbf{X}_{\left[ n\right] }\right)$
(where $L_{s}\left( \mathbf{X}_{\left[ n\right] }\right)$ is defined as the Hilbert space of all symmetric statistics $T\left( \mathbf{X}_{\left[ n%
\right] }\right) $ with respect to the inner product $\left\langle
T_{1},T_{2}\right\rangle := \mathbb{E}\left( T_{1}\left( \mathbf{X}_{%
\left[ n\right] }\right) T_{2}\left( \mathbf{X}_{\left[ n\right] }\right)
\right) ),$ one can meaningfully define the sequence of \textit{symmetric
Hoeffding spaces} associated with $\mathbf{X}_{\left[ n\right] }$, denoted by
\begin{equation*}
\left\{ SH_{k}\left( \mathbf{X}_{\left[ n\right] }\right) :k=0,...,n\right\},
\end{equation*}%
as follows: $SH_{0}\left(
\mathbf{X}_{\left[ n\right] }\right) := SU_{0}\left( \mathbf{X}_{%
\left[ n\right] }\right) =\mathfrak{R},$ and%
\begin{equation*}
SH_{k}\left( \mathbf{X}_{\left[ n\right] }\right) := SU_{k}\left(
\mathbf{X}_{\left[ n\right] }\right) \cap SU_{k-1}\left( \mathbf{X}_{\left[ n%
\right] }\right) ^{\bot },\qquad k=1,...,n,
\end{equation*}%
where all orthogonals (here and in the sequel) are taken in $L_{s}\left(
\mathbf{X}_{\left[ n\right] }\right) .$ The following representation is
therefore at hand%
\begin{equation*}
L_{s}\left( \mathbf{X}_{\left[ n\right] }\right)
=\bigoplus\limits_{k=0}^{n}SH_{k}\left( \mathbf{X}_{\left[ n\right] }\right)
,
\end{equation*}%
where `$\oplus $" stands for an orthogonal sum. This fact implies that,
for all $n\geq 2,$ every symmetric statistic $T\left( \mathbf{X}_{\left[ n%
\right] }\right) $ admits an almost-surely unique representation of the type:

\begin{equation}\label{e:hd1}
T\left( \mathbf{X}_{\left[ n\right] }\right) =\mathbb{E}\left( T\right)
+\sum_{k=1}^{n}F_{k}\left( \mathbf{X}_{\left[ n\right] }\right) ,
\end{equation}%
where the $F_{k}$'s are uncorrelated $U$-statistics such that, for all $%
k=1,...,n,$ there exists a symmetric kernel $\varphi _{k}$ of order $k$
satisfying%
\begin{equation}\label{e:hd2}
F_{k}\left( \mathbf{X}_{\left[ n\right] }\right) =\sum_{1\leq i_{1}<\cdot
\cdot \cdot <i_{k}\leq n}\varphi _{k}\left( X_{i_{1}},...,X_{i_{k}}\right) .
\end{equation}%

We are now in a position to recall the definition of Hoeffding decomposability for exchangebale sequences given in Peccati [2004].

\begin{definition}\label{d:hd} {\rm The exchangeable sequence $\mathbf{X}_{\left[ 1,\infty \right) }$ is said to be \textit{%
Hoeffding decomposable} if, for all $n\geq 2$ and all $k=1,...,n,$ the
following double implication holds: $F_{k}\in SH_{k}\left( \mathbf{X}_{\left[
n\right] }\right) $ if and only if the kernel $\varphi _{k}$ appearing in
its representation (\ref{e:hd1})--(\ref{e:hd2}) is \textit{completely degenerate}, that is
\begin{equation}
\mathbb{E}\left( \varphi \left( \mathbf{X}_{\left[ k\right] }\right) \text{ }%
|\text{ }\mathbf{X}_{\left[ 2,k\right] }\right) =0,\qquad a.s.\text{-}%
\mathbb{P}.  \label{comp-degen}
\end{equation}
}
\end{definition}

\subsection{Weak independence}\label{ss:wi}

Fix $n\geq 2$ and let $\mathcal{S}\left(
D^{n}\right) $ be the class of all symmetric real-valued functions on $%
D^{n}. $ Fix $\varphi \in \mathcal{S}\left( D^{n}\right) $ and define the
functions $\left[ \varphi \right] _{n,n-1}^{\left( n-1\right)
}:D^{n-1}\rightarrow \mathfrak{R}$ and $\left[ \varphi \right]
_{n,n-1}^{\left( n-u\right) }:D^{n-1}\rightarrow \mathfrak{R},$ $u=2,...,n$
as the unique mappings such that%
\begin{equation}
\left[ \varphi \right] _{n,n-1}^{\left( n-1\right) }\left( \mathbf{X}_{\left[
2,n\right] }\right) =E\left( \varphi \left( \mathbf{X}_{\left[ n\right]
}\right) \text{ }|\text{ }\mathbf{X}_{\left[ 2,n\right] }\right) ,\qquad a.s.%
\text{-}\mathbb{P},  \label{dentro}
\end{equation}%
and%
\begin{equation}
\left[ \varphi \right] _{n,n-1}^{\left( n-u\right) }\left( \mathbf{X}_{\left[
u+1,u+n-1\right] }\right) =E\left( \varphi \left( \mathbf{X}_{\left[ n\right]
}\right) \text{ }|\text{ }\mathbf{X}_{\left[ u+1,u+n-1\right] }\right)
,\qquad a.s.\text{-}\mathbb{P},  \label{fuori}
\end{equation}%
respectively. Exchangeability and symmetry imply that $D^{n-1}\to
\mathfrak{R}:\mathbf{x}\mapsto \left[ \varphi \right] _{n,n-1}^{\left(
n-1\right) }\left( \mathbf{x}\right) $ and $D^{n-1}\to \mathfrak{R}:%
\mathbf{x}\mapsto \left[ \varphi \right] _{n,n-1}^{\left( 0\right) }\left(
\mathbf{x}\right)$ (the latter being the function appearing (\ref{fuori})
written for $u=n$) are symmetric functions whereas, for $u=2,...,n-1,$ the
function $D^{n-1}\ni \left( x_{1},...,x_{n-1}\right) \mapsto \left[ \varphi %
\right] _{n,n-1}^{\left( n-u\right) }\left( x_{1},...,x_{n-1}\right) $ is
separately symmetric in the variables $\left( x_{1},...,x_{n-u}\right) $ and
$\left( x_{n-u+1},...,x_{n-1}\right) ,$ and not necessarily as a function on
$D^{n-1}.$ Recall that, given a function $f:D^{n}\rightarrow \mathfrak{R}$,
the canonical symmetrization of $f,$ denoted by $\tilde{f},$ is given by
\begin{equation*}
\tilde{f}\left( \mathbf{x}_{n}\right) =\frac{1}{n!}\sum_{\pi \in \mathfrak{S}%
_{n}}f\left( \mathbf{x}_{\pi \left( n\right) }\right) ,\,\,  \mathbf{x}_{n}\in D^{n}.%
\end{equation*}
Now define the sequence of vector
spaces%
\begin{equation*}
\Xi _{n}\left( \mathbf{X}_{\left[ 1,\infty \right) }\right) :=
\left\{ \varphi \in \mathcal{S}\left( D^{n}\right) :\left[ \varphi \right]
_{n,n-1}^{\left( n-1\right) }\left( \mathbf{X}_{\left[ 2,n\right] }\right)
=0\right\} ,\qquad n\geq 2,
\end{equation*}%
and the array of spaces%
\begin{equation*}
\tilde{\Xi}_{n,n-u}\left( \mathbf{X}_{\left[ 1,\infty \right) }\right)
:= \left\{ \varphi \in \mathcal{S}\left( D^{n}\right) :\widetilde{%
\left[ \varphi \right] }_{n,n-1}^{\left( n-u\right) }\left( \mathbf{X}_{%
\left[ u+1,u+n-1\right] }\right) =0\right\}
,\quad u=2,...,n,\text{ }n\geq 2,
\end{equation*}%
where $\widetilde{\left[ \varphi \right] }_{n,n-1}^{\left( n-u\right) }$ is
the canonical symmetrization of $\left[ \varphi \right] _{n,n-1}^{\left(
n-u\right) }.$

\begin{definition}{\rm We say that the sequence $\mathbf{X}_{\left[ 1,\infty \right)
}$ is \textit{weakly independent }if, for every $n\geq 2,$%
\begin{equation*}
\Xi _{n}\left( \mathbf{X}_{\left[ 1,\infty \right) }\right) \subset
\bigcap\limits_{u=2}^{n}\tilde{\Xi}_{n,n-u}\left( \mathbf{X}_{\left[
1,\infty \right) }\right) .
\end{equation*}%
In other words, weak independence occurs if, for every $n\geq 2$ and every $%
\varphi \in \mathcal{S}\left( D^{n}\right) ,$ the relation $\left[ \varphi \right]
_{n,n-1}^{\left( n-1\right) }\left( \mathbf{X}_{\left[ 2,n\right] }\right)
=0$ necessarily implies that $\widetilde{\left[ \varphi \right] }_{n,n-1}^{\left(
n-u\right) }\left( \mathbf{X}_{\left[ u+1,u+n-1\right] }\right) =0$ for all $%
u=2,...,n.$ For instance, when $n=3$ weak independence yields the following implication for every
symmetric $\varphi $ on $D^{2}$:%
\begin{equation*}
\mathbb{E}\left( \varphi \left( X_{1},X_{2}\right) \text{ }|\text{ }%
X_{2}\right) =0\quad \Rightarrow \quad \mathbb{E}\left( \varphi \left(
X_{1},X_{2}\right) \text{ }|\text{ }X_{3}\right) =0.
\end{equation*}
}
\end{definition}

The following statement contains some of the main findings in Peccati [2004] (Part I) and El-Dakkak and Peccati [2008] (Part II).
\begin{theorem}[Peccati, 2004; El-Dakkak and Peccati, 2008]\label{t:eldkp}
Let $\mathbf{X}_{\left[ 1,\infty \right)}$ be an exchangeable sequence of random variables with values in the finite set $D$.

\begin{itemize}

\item[\bf (I)] Assume that
\begin{equation}
SH_{k}\left( \mathbf{X}_{\left[ n\right] }\right) \neq \left\{ 0\right\}
,\qquad \forall k=1,...,n,\text{ }\forall n\geq 2.  \label{theorem0}
\end{equation}%
Then $\mathbf{X}_{\left[ 1,\infty \right)}$ is Hoeffding decomposable if and only if it is weakly independent.
\item[\bf (II)] Assume that $D = [2]$ and that $\mathbf{X}_{\left[ 1,\infty \right)}$ is non deterministic (so that (\ref{theorem0}) is automatically satisfied). Then, $\mathbf{X}_{\left[ 1,\infty \right)}$ is Hoeffding decomposable if and only if $\mathbf{X}_{\left[ 1,\infty \right)}$ is either a P\' olya sequence or an i.i.d. sequence.

\end{itemize}

\end{theorem}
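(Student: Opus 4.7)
For the direction ``$\Rightarrow$'' I would take $\varphi \in \Xi_n(\mathbf{X}_{[1,\infty)})$, so that $\varphi$ is completely degenerate in the sense of (\ref{comp-degen}). By Hoeffding decomposability, the symmetric $U$-statistic $F_\varphi(\mathbf{X}_{[N]}) = \sum_{1\leq i_1<\cdots <i_n\leq N} \varphi(X_{i_1},\ldots,X_{i_n})$ lies in $SH_n(\mathbf{X}_{[N]})$ for every $N\geq n$, and is therefore orthogonal to every $U$-statistic of order $n-1$. Expanding an inner product $\langle F_\varphi, G \rangle$ against such a $G$, splitting the double sum according to the overlap between the $n$- and $(n-1)$-index sets, and using exchangeability, rewrites the inner product as an integral that couples the kernel of $G$ against the symmetrized conditional expectations $\widetilde{[\varphi]}_{n,n-1}^{(n-u)}$ for $u=2,\ldots,n$; letting this kernel vary over a spanning family forces each symmetrization to vanish, which is the weak independence inclusion. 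For the converse, I would argue by induction on $k$: condition (\ref{theorem0}) guarantees that the candidate Hoeffding spaces are non-trivial, and weak independence provides exactly the orthogonality to $SU_{k-1}(\mathbf{X}_{[n]})$ required to identify $SH_k(\mathbf{X}_{[n]})$ with the $U$-statistics generated by completely degenerate kernels, via the same covariance computation run in reverse.

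\textbf{Part (II): The two-color case.} Since P\'olya sequences are shown to be weakly independent in Peccati [2004] by a direct verification on the Dirichlet directing measure (\ref{for:dirichletbis}) with $K=2$, and i.i.d. sequences trivially satisfy every such orthogonality, the ``if'' direction is immediate. For the converse, take a non-deterministic, Hoeffding decomposable sequence $\mathbf{X}_{[1,\infty)}$ with values in $\{d_1,d_2\}$. Non-determinism guarantees that all relevant $SH_k(\mathbf{X}_{[n]})$ are non-trivial, so Part (I) gives weak independence, and by de Finetti's theorem the law of the sequence is encoded, through (\ref{amsterdam}), in the moment sequence $m_k := \int_0^1 \theta^k\, \gamma(d\theta)$ of the directing measure $\gamma$ on $[0,1]$.

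\textbf{Part (II), main step.} The plan is to translate weak independence into a constraint on the predictive probabilities $\mathbb{P}(X_{n+1} = d_1 \mid X_1,\ldots, X_n)$, which by exchangeability depend only on $(n,i)$, where $i$ is the count of $d_1$'s in the sample, and then to recognize the admissible forms. Already at $n=3$ the implication ``$\mathbb{E}[\varphi(X_1,X_2) \mid X_2] = 0 \Rightarrow \mathbb{E}[\varphi(X_1,X_2) \mid X_3] = 0$'' for symmetric $\varphi$ on $\{d_1,d_2\}^2$, written out in terms of the three values of $\varphi$ and the joint probabilities arising from (\ref{amsterdam}), yields one nontrivial algebraic identity among $m_1, m_2, m_3$. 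Applying the same scheme with completely degenerate kernels of every order and testing against one additional variable produces a family of relations whose general solution forces the predictive probability to be either of the P\'olya form $(a+i)/(a+b+n)$ for some $a, b > 0$, or a constant $\theta_0 \in (0,1)$ independent of the sample; the first case pins down $\gamma$ as a Beta$(a,b)$ distribution and thus identifies $\mathbf{X}_{[1,\infty)}$ as a P\'olya sequence, while the second forces $\gamma = \delta_{\theta_0}$ and an i.i.d. sequence, with non-determinism excluding $\theta_0 \in \{0,1\}$. The main obstacle is the clean derivation of this functional constraint: the test kernels must be chosen so that the resulting equations are genuinely restrictive rather than tautologically satisfied by every de Finetti measure, and one must then argue that the only solutions of the resulting recursion on $(m_k)$ are the two listed families.
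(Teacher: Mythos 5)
This statement is quoted in the paper from Peccati [2004] (Part I) and El-Dakkak and Peccati [2008] (Part II); the paper itself gives no proof, so there is nothing internal to compare your argument against. Judged on its own terms, your proposal is an outline of the right architecture but has a genuine gap in Part (II).

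For Part (I), your forward direction is essentially the standard argument of Peccati [2004]: expand $\langle F_\varphi, G\rangle$ for $G \in SU_{n-1}$, split by overlap cardinality of the index sets, and use exchangeability to reduce each overlap class to a term involving $\widetilde{[\varphi]}_{n,n-1}^{(n-u)}$. Two points are under-specified. First, "letting this kernel vary over a spanning family forces each symmetrization to vanish" is not automatic: the overlap decomposition produces a \emph{linear combination} over $u$ of such terms, and isolating each $u$ separately requires varying $N$ (or the order of $G$) and inverting the resulting triangular system. Second, in the converse direction, weak independence gives you orthogonality of degenerate-kernel $U$-statistics to $SU_{k-1}$, but Hoeffding decomposability is a double implication: you must also show that these $U$-statistics exhaust $SH_k(\mathbf{X}_{[n]})$, which is where condition (\ref{theorem0}) enters through a dimension count, not merely through non-triviality of the spaces.

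For Part (II), the "if" direction is fine. But the converse is where all the work lives, and your own final paragraph concedes that the decisive step is missing: you do not exhibit the test kernels, you do not derive the functional equation on the predictive probabilities $\mathbb{P}(X_{n+1}=d_1 \mid i \text{ ones in } n \text{ draws})$, and you do not prove that its only solutions are the affine form $(a+i)/(a+b+n)$ or a constant. This is precisely the content of the combinatorial characterization in El-Dakkak and Peccati [2008] (their Proposition 4, of which Theorem \ref{omaraus_K} of the present paper is the $K\geq 3$ generalization): weak independence is shown equivalent to the vanishing of an explicit alternating sum of binomial coefficients against conditional probabilities $\mathbb{P}^n_{n+u-1}(\cdot\mid\cdot)$, for all $n,u,z,m$, and one must then solve this system. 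The subsequent identification of $\gamma$ as Beta or a point mass from the affine/constant predictive form is standard, but without the intermediate combinatorial identity the argument does not close. As written, Part (II) is a plan rather than a proof.
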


%\begin{remark}{\rm
%As proved in Peccati [2004], Part I of Theorem \ref{t:eldkp}, together the definition of Hoeffding decomposability, extend to exchangeable sequences with values in a general Polish space. Note that, in this paper, we are never to use the definition of a Hoeffding decomposable sequence with values in a Polish space with infinite cardinality.
%}
%\end{remark}

\subsection{Urn processes and a result by Hill, Lane and Sudderth}\label{ss:up}
Let $\mathbf{X}_{[1,\infty)}:=\{X_n:n\geq 1\}$ be a sequence of $\{0,1\}$-valued random variables. Roughly speaking, $\mathbf{X}_{[1,\infty)}$ is a \textit{two-color urn process} if its probabilistic structure can be represented by successive drawings from an urn with changing composition. More precisely, consider an urn containing $r$ red balls and $b$ black balls, $r,b\in\{1,2,...\},$ and let $Y_0:=r/(r+b)$ denote the initial proportion of red balls in the urn. Suppose that a red ball is added with probability $f(Y_0)$ and that a black ball is added with probability $1-f(Y_0)$, where $f$ denotes a function from the unit interval into itself, and let $Y_1$ be the new proportion of red balls in the urn. Now, iterate the procedure to generate a sequence $(Y_0,Y_1,Y_2,\ldots).$ For all $n\geq 1,$ let $X_n$ denote the indicator of the event that the $n$-th ball added is red. The process $\mathbf{X}_{[1,\infty)}:=\{X_n:n\geq 1\}$ constructed in this manner is called a \textit{two-color urn process with initial composition $(r,b)$ and urn function $f$}. It is immediately seen that, for all $n\geq 1,$
$$\mathbb{P}\left(X_{n+1}=1\text{ }|\text{ }X_1,\ldots,X_n\right)=f(Y_n).$$
In other words, two-color urn processes are characterized by the fact that the conditional probability that, at stage ${n+1},$ a red ball is added depends uniquely on the proportion of red balls at stage $n$, via the function $f$.

A two-color urn process is said to be exchangeable if the sequence $\mathbf{X}_{[1,\infty)}$ is exchangeable (see Section 1.1). In particular, if $\mathbf{X}_{[1,\infty)}$ is a two-color urn process with initial composition $(r,b),$ and the identity map as urn function then (a) it is exchangeable and (b) the de Finetti measure of $\mathbf{X}_{[1,\infty)}$ is a Beta distribution with parameters $r$ and $b$. In that case, we will say that $\mathbf{X}_{[1,\infty)}$ is a \textit{two-color P\'{o}lya urn process}. Similarly, a two-color urn process, $\mathbf{X}_{[1,\infty)}$, with constant urn function, identically equal to $Y_0$ is (a) exchangeable and (b) has de Finetti measure equal to a point mass at $Y_0.$ In other words, $\mathbf{X}_{[1,\infty)}$ is a sequence of i.i.d. Bernoulli trials with parameter $Y_0$. Finally, a two-color urn process, $\mathbf{X}_{[1,\infty)},$ with urn function
$$f(x)=p\mathbf{1}_{\{X_0\}}(x)+\mathbf{1}_{(X_0,1]}(x), \qquad p\in(0,1),$$
is (a) exchangeable and (b) has de Finetti measure $\gamma=p\delta_{\{1\}}+(1-p)\delta_{\{0\}}.$ In that case, we will say that $\mathbf{X}_{[1,\infty)}$ is a \textit{deterministic urn process}. Note that such processes are characterized by the fact that the support of their de Finetti measure is $\{0\}\cup\{1\}.$

The following statement is the main result of Hill, Lane and Sudderth [1987]: it shows that the three classes described above are the only two-color exchangeable urn processes.

\begin{theorem}[Hill, Lane and Sudderth, 1987]\label{t:mainhls}
 Let $\mathbf{X}_{[1,\infty)}$ be an exchangeable non deterministic urn process with values in $\{0,1\}$. Then, $\mathbf{X}_{[1,\infty)}$ is either a two-color P\' olya urn process, or an i.i.d. Bernoulli sequence.
\end{theorem}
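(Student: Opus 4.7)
The plan is to merge the urn-process dynamics with de~Finetti's representation. By the urn property, $\mathbb{P}(X_{n+1}=1\mid\mathcal{F}_n)=f(Y_n)$ with $Y_n=(r+S_n)/(r+b+n)$ and $S_n=X_1+\cdots+X_n$. Since $\mathbf{X}_{[1,\infty)}$ is also exchangeable, equation (\ref{amsterdam}) furnishes a unique directing measure $\gamma$ on $[0,1]$ and, writing $\theta\sim\gamma$, also gives $\mathbb{P}(X_{n+1}=1\mid\mathcal{F}_n)=\mathbb{E}(\theta\mid S_n)$. Equating the two expressions produces the key identity
\begin{equation*}
f\!\left(\frac{r+s}{r+b+n}\right)=\frac{\int_0^1\theta^{s+1}(1-\theta)^{n-s}\,\gamma(d\theta)}{\int_0^1\theta^{s}(1-\theta)^{n-s}\,\gamma(d\theta)},\qquad 0\le s\le n,\ n\ge 0,
\end{equation*}
so that the posterior mean of $\theta$ given $S_n=s$ is forced to be a function solely of the proportion $(r+s)/(r+b+n)$.

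I would then split into two cases. If $\gamma$ is a Dirac mass at some $p\in[0,1]$, the right-hand side collapses to $p$ for every pair $(s,n)$, so $f$ is the constant $p$ on every accessible rational and $\mathbf{X}_{[1,\infty)}$ is an i.i.d.\ Bernoulli$(p)$ sequence; the extremal values $p\in\{0,1\}$ are excluded by the non-deterministic hypothesis. Otherwise $\mathrm{supp}(\gamma)$ contains at least two points, and the goal becomes to show $\gamma=\mathrm{Beta}(r,b)$ and $f(y)=y$ on the accessible rationals, which realizes $\mathbf{X}_{[1,\infty)}$ as a two-color P\'olya urn. To handle this case, I would first use the martingale picture: $Y_n\to\theta$ a.s.\ (de Finetti plus the strong law) and $f(Y_n)=\mathbb{E}(\theta\mid\mathcal{F}_n)\to\theta$ a.s.\ by L\'evy's theorem, so $f(Y_n)-Y_n\to 0$; since $\theta\sim\gamma$, this shows that for $\gamma$-a.e.\ $y$ there is a sequence of accessible rationals $q_n\to y$ with $f(q_n)\to y$, i.e.\ $f$ is asymptotically the identity along accessible rationals approximating $\mathrm{supp}(\gamma)$. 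To propagate this to every accessible rational I would use the exchangeability constraint obtained by transposing two consecutive draws,
\begin{equation*}
f(a/c)\bigl[1-f((a+1)/(c+1))\bigr]=\bigl[1-f(a/c)\bigr]\,f(a/(c+1)),
\end{equation*}
valid for every admissible integer pair $(a,c)$ with $a\in\{r,\ldots,c-b\}$ and $c\ge r+b$. Substituting $f=\mathrm{id}$ back into the key identity then produces the recursion $M_{s+1,n-s}=\frac{r+s}{r+b+n}\,M_{s,n-s}$ with $M_{0,0}=1$, which identifies every moment of $\gamma$ with that of $\mathrm{Beta}(r,b)$; Hausdorff's moment problem on $[0,1]$ gives $\gamma=\mathrm{Beta}(r,b)$.

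The main obstacle is the propagation step: lifting the asymptotic identification ``$f\approx\mathrm{id}$ along one sequence of accessible rationals approximating each $y\in\mathrm{supp}(\gamma)$'' to the pointwise identity ``$f(q)=q$ at every accessible rational $q$''. The function $f$ is only required to be a $[0,1]$-valued map on $[0,1]$ and enters the dynamics only through its values on a countable set, so no continuity is automatic. The adjacent-transposition equation above is the only structural lever available, and the work lies in checking that, once $f$ is known to coincide with the identity on a sufficiently rich subset governed by $\mathrm{supp}(\gamma)$, this equation forces the identity on the whole accessible lattice and leaves no room for any further non-deterministic exchangeable urn process beyond the P\'olya and i.i.d.\ families.
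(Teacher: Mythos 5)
First, a point of reference: the paper does not prove Theorem \ref{t:mainhls} at all --- it is quoted verbatim as the main result of Hill, Lane and Sudderth [1987] --- so there is no internal proof to compare yours against. Judged on its own terms, your outline assembles the right objects: the predictive identity equating $f((r+s)/(r+b+n))$ with the ratio of moments of $\gamma$, the adjacent-transposition equation $f(a/c)\bigl[1-f((a+1)/(c+1))\bigr]=\bigl[1-f(a/c)\bigr]f(a/(c+1))$, and the Hausdorff moment argument once $f=\mathrm{id}$ is known on the accessible lattice. But the proposal is not a proof: the step you yourself flag as ``the main obstacle'' is precisely where the entire content of the theorem lives, and nothing in the outline resolves it. The martingale argument only shows that $f(Y_n)-Y_n\to 0$ along the random sequences $Y_n(\omega)$, whose limit points lie in $\mathrm{supp}(\gamma)$; if $\mathrm{supp}(\gamma)$ is small (two atoms, or a set meeting $\{0,1\}$), this yields essentially no pointwise information about $f$ on the lattice. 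Moreover the transposition identity is a consequence of exchangeability only at states reached with positive probability, so whenever $f$ takes the values $0$ or $1$ the accessible lattice degenerates --- which is exactly how the deterministic urn function $f(x)=p\mathbf{1}_{\{Y_0\}}(x)+\mathbf{1}_{(Y_0,1]}(x)$ satisfies every instance of the equation it is ever tested against without being the identity. So the equation alone, plus asymptotics near $\mathrm{supp}(\gamma)$, does not force $f=\mathrm{id}$; the propagation step is a genuine gap, not a routine verification.

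A second, unacknowledged defect is that your case split is not aligned with the hypothesis. In this paper ``non deterministic'' means $\gamma(\{0,1\})<1$; it does not exclude atoms of $\gamma$ at $0$ or $1$, nor does it exclude $\mathrm{supp}(\gamma)$ being a two-point set such as $\{1/2,1\}$. Your Case 2 therefore contains both the genuinely deterministic processes (for which the claimed conclusion $\gamma=\mathrm{Beta}(r,b)$ is false) and mixed measures like $\tfrac12\delta_{1/2}+\tfrac12\delta_{1}$, which must be \emph{ruled out} rather than assumed away. The architecture that actually closes the argument --- essentially the one in Hill, Lane and Sudderth's paper --- works directly and algebraically with the array $p_n(s)=f((r+s)/(r+b+n))$ and the transposition recursion, establishing by induction the trichotomy: either all accessible $p_n(s)$ coincide (i.i.d.), or some accessible $p_n(s)\in\{0,1\}$ (which under exchangeability forces the deterministic case, excluded by hypothesis), or $p_n(s)=(r+s)/(r+b+n)$ throughout (P\'olya). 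Your moment recursion then correctly finishes the P\'olya branch, and your Case 1 correctly handles Dirac $\gamma$; what is missing is the central dichotomy itself.
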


\medskip

\begin{remark}{\rm
In the parlance of the present article, a distinction is made between \textit{P\'{o}lya sequences} and \textit{P\'{o}lya urn processes}, the latter being a proper subset of the former: in fact, according to our definitions, a P\'{o}lya urn process is a P\'{o}lya sequence with de Finetti measure given by a Beta distribution whose parameters are integer-valued.
}
\end{remark}

\medskip

We now turn to the definition of multicolor urn processes. Consider an urn containing balls of $K$ colors, $K\in\{3,4,\ldots\}$, and suppose that it contains exactly $r_i$ balls of color $d_{i}$, respectively, $r_i\in\{1,2,\ldots\},$ $i=1,\ldots,K.$ Let $Y_0:=(Y_{0,1},\ldots,Y_{0,K})$ be the vector of initial proportions of balls of each color in the urn, where $Y_{0,i}:=\frac{r_i}{\sum_{k=1}^Kr_k}$ denotes the proportion of balls of color $d_{i}$, $i=1,\ldots,K$. Suppose that, at stage $1,$ a ball is added and that it is of color $d_{j}$ with probability $f_j(Y_{0,1},\ldots,Y_{0,K}),$ $j=1,\ldots,K,$ where the $f_j$'s are $[0,1]$-valued functions defined on the simplex
\begin{equation} \label{simplexurn}
S_K:=\left\{\mathbf{y}=(y_1,\ldots,y_K):\sum_{k=1}^Ky_k=1,\text{ }y_k\geq 0,\text{ }k=1,\ldots,K\right\},
\end{equation}
such that, for all $\mathbf{y}\in S_K,$ $\sum_{j=1}^Kf_j(\mathbf{y})=1.$ Let $Y_1:=(Y_{1,1},\ldots,Y_{1,K})$ be the new composition of the urn and iterate the process to generate a sequence $(Y_0,Y_1,Y_2,\ldots).$ For all $n\geq 1$, let $X_n$ be the $\{d_{1},\ldots,d_{K}\}$-valued random variable such that $X_n=d_{j}$ if and only if the ball added, at stage $n$, is of color $d_{j}$. The process $\mathbf{X}_{[1,\infty)}:=\{X_n:n\geq 1\}$ obtained in this manner is called a \textit{$K$-color urn process with initial composition $(r_1,\ldots,r_K)$ and urn function $f=(f_1,\ldots,f_K)$}, and we have, for all $n\geq 1$ and all $j\in\{1,\ldots,K\},$
$$\mathbb{P}\left(X_{n+1}=d_{j}\text{ }|\text{ }X_1,\ldots,X_n\right)=f_j(Y_n).$$
A $K$-color urn process with initial composition $(r_1,\ldots,r_K)$ and an urn function given by the identity map is (a) exchangeable and (b) has de Finetti measure given by a Dirichlet distribution of parameters $r_1,\ldots,r_K$. Such an urn process will be called a \textit{$K$-color P\'{o}lya urn process with initial composition $(r_1,\ldots,r_K)$}. Once more, the class of $K$-color P\'{o}lya urn processes is a proper subset of the class of $K$-color P\'{o}lya sequences. The following example, taken from p. 1591 of Hill, Lane and Sudderth [1987], shows that a neat result such as Theorem \ref{t:mainhls} cannot hold for exchangeable urn processes with values in sets with strictly more than two elements.

\begin{example}\label{ex:HLS}{\rm An urn contains three balls, 1 red, 1 black and 1 green. At each stage, a ball is drawn. If the ball is red, it is replaced and another red ball is added. If the ball is black or green, it is replaced, and a green or black ball is added, depending whether a fair coin falls head or tails. Attaching the labels 1, 2, 3, respectively, to the colors red, black and green, one sees immediately that the sequence $\{X_n : n\geq 1\}$, defined as $X_n=j$ ($j=1,2,3$) according to whether the $n$th ball added to the urn is of color $j$, is an exchangeable urn process with urn function given by $f_1({\bf y}) = y_1$ and $f_2({\bf y}) = f_3({\bf y}) = (y_2 + y_3)/2$. In particular, $\{X_n\}$ is not a P\'olya urn process. }\label{ex:hls}
\end{example}

%A shown in the forthcoming Section \ref{ss:hls}
The main achievement of the present paper is the proof that a generalization of the previous example provides examples of Hoeffding decomposable exchangeable sequences that are neither P\'olya nor i.i.d..

\subsection{Plan}\label{ss:plan}

Section \ref{s:mainresults} contains a discussion and the statement of our main result: Theorem \ref{t:main}. Section \ref{s:maintools} contains the main combinatorial tools and the novel combinatorial characterization that are needed throughout the present paper, whereas the proof of Theorem \ref{t:main} is provided in Section \ref{s:proof}.

\section{A remarkable class of exchangeable sequences}\label{s:mainresults}

%\subsection{Plain Hoeffding-decomposability: a negative answer to Problem A} \label{ss:hls}

%\noindent As was previously stated, in Hill \textit{et al.} $[1987]$, it is proved that the only non i.i.d. exchangeable urn processes with values in $\{0,1\}$ are two color P\'{o}lya urn processes and that this failed to be true in the multicolor case. In the present paper, we show that, a generalization of the processes that were shown, in Hill \textit{et al.}, to be exchangeable urn processes, while not enjoying the P\'{o}lya structure, turn out to be Hoeffding-decomposable. More precisely, we will, in fact, show that the following sequences are Hoeffding decomposable.
To achieve the announced negative result we introduce a remarkable class of exchangeable sequences. As will be clear from its definition, this class generalizes the exchangeable sequence introduced by Hill, Lane and Sudderth [1987] that we recalled in Example~\ref{ex:HLS}. Let $K\geq 3$, and let $\mathbf{X}_{[1,\infty)}$ be an exchangeable sequence with values in $D=\{d_{1},\ldots,d_{K}\}$, whose de Finetti measure $\gamma$ is such that

\begin{equation}\label{genHLS}
\gamma(\text{d}\theta_{1},\ldots,\text{d}\theta_{K-1})=\frac{1}{B(\pi,\nu)}\delta_{\alpha_{1}(1-\theta_{1})}(\text{d}\theta_{2})\cdots\delta_{\alpha_{K-2}(1-\theta_{1})}(\text{d}\theta_{K-1})\theta_{1}^{\pi-1}(1-\theta_{1})^{\nu-1}\text{d}\theta_{1},
\end{equation}
where $\pi,\nu>0$ and $\alpha_{1},\ldots,\alpha_{K-2}>0$ are such that $\sum_{i=1}^{K-2}\alpha_{i}<1$.

\begin{remark}{\rm
Equation (\ref{genHLS}) defines the de Finetti measure of an exchangeable sequence that is neither i.i.d. nor P\' olya. In the sequel, we will refer to any such sequence as a $$HLS_{K}(\pi,\nu,\alpha_{1},\ldots,\alpha_{K-2}) \mbox{\ \ \it exchangeable \, sequence},$$ (or, simply, a $HLS_{K}$-exchangeable sequence, if the parameters $\pi,\nu,\alpha_{1},\ldots,\alpha_{K-2}$ need not be specified in a given context), with reference to the paper by Hill, Lane and Sudderth [1987]. In particular, as deduced from the discussion below, the case $HLS_{3}(1,2,\frac{1}{2})$ corresponds to the $3$-color urn sequence described in Example \ref{ex:hls}.

}
\end{remark}

When $\pi$ and $\nu$ are integer-valued, for any fixed $K$ all $HLS_{K}(\pi,\nu,\alpha_{1},\ldots,\alpha_{K-2})$ exchangeable sequences are non P\' olya exchangeable urn processes. To see this, it suffices to notice that any such sequence can be generated by means of an urn with initial composition $(\pi,\nu_{1},\ldots,\nu_{K-1})$, where the integers $\nu_i$ are such that $\nu=\sum_{i=1}^{K-1}\nu_{i}$, and with urn function $f=(f_{1},\ldots,f_{K})$ given by $f_{1}(\mathbf{y})=y_{1}$, $f_{j}(\mathbf{y})=\alpha_{j-1} \sum_{i=1}^{K-1} y_i$ ($j=2,...,K-1$), and $f_{K}(\mathbf{y})= (1-\alpha) \sum_{i=1}^{K-1} y_i$, with $\alpha=\sum_{i=1}^{K-2}\alpha_{i}$. An $HLS_{K}(\pi,\nu,\alpha_{1},\ldots,\alpha_{K-2})$ exchangeable urn process (i.e. with integer-valued $\pi$ and $\nu$) has consequently the following interpretation: suppose an urn contains initially $\pi$ balls of color $d_{1}$ and $\nu_{i-1}$ balls of color $d_{i}$, $i=2,\ldots,K$, with $\sum_{i=1}^{K-1}\nu_{i}=\nu$. The following random experiment is run at each stage: a ball is drawn, if it is of color $d_{1}$, it is replaced along with another of the same color. If the ball drawn is of color $d_{i}$, $i=2,\ldots,K$, it is replaced along with a ball of color $d_{j}$, with probability $t_{j}$, where $t_{j}=\alpha_{j-1}$, if $j=2,\ldots,K-1$ and $t_{j}=1-\alpha=1-\sum_{s=1}^{K-2}\alpha_{s}$, if $j=K$.

\begin{remark}{\rm In the above described explicit realization of a sequence of the type $HLS_{K}(\pi,\nu,\alpha_{1},\ldots,\alpha_{K-2})$, the initial decomposition of the index $\nu$ into integers $\nu_i$, $i=2,...,K$, is immaterial.
}
\end{remark}
\noindent Let $\mathbf{X}_{[1,\infty)}$ be an $HLS_{K}(\pi,\nu,\alpha_{1},\ldots,\alpha_{K-2})$ exchangeable sequence. For any $\mathbf{x}_{n}=(x_{1},\ldots,x_{n})\in\{d_{1},\ldots,d_{K}\}^{n}$, containing exactly $z_{i}$ coordinates equal to $d_{i}$, $i=1,\ldots,K-1$, setting $\theta:=\sum_{i=1}^{K-1}\theta_{i}$ and $z:=\sum_{i=1}^{K-1}z_{i}$, one has that
\begin{eqnarray}
\nonumber
&& \mathbb{P}(\mathbf{X}_{[n]}=\mathbf{x}_{n})\\
\nonumber
&&=\frac{1}{B(\pi,\nu)}\int_{\Theta_{K-1}}(1-\theta)^{n-z}(\prod_{i=1}^{K-1}\text{ }\theta_{i}^{z_{i}})\delta_{\alpha_{1}(1-\theta_{1})}(\text{d}\theta_{2})\cdots\delta_{\alpha_{K-2}(1-\theta_{1})}(\text{d}\theta_{K-1})\theta_{1}^{\pi-1}(1-\theta_{1})^{\nu-1}\text{d}\theta_{1}\\
&&=
\nonumber
\frac{(1-\alpha)^{n-z}\prod_{i=2}^{K-1}\alpha_{i-1}^{z_{i}}}{B(\pi,\nu)}\int_0^{1}\theta_{1}^{z_{1}+\pi-1}(1-\theta_{1})^{n-z_{1}+\nu-1}\text{d}\theta_{1}\\
&&=
\label{probHLS}
\left[(1-\alpha)^{n-z}\prod_{i=1}^{K-2}\alpha_{i}^{z_{i+1}}\right]\frac{B(z_{1}+\pi,n-z_{1}+\nu)}{B(\pi,\nu)}.
\end{eqnarray}

The following statement provides a negative answer to Problem A (as discussed in Section \ref{ss:ovw}), and is one of the main achievement of the present paper. In particular, it shows that a naive generalization of Theorem \ref{t:eldkp}-(II) cannot be achieved for sets containing strictly more than 2 elements.

\begin{theorem}\label{t:main} For any $K\geq 3$ and any choice of the parameters $\pi,\nu>0$ and $\alpha_{1},\ldots,\alpha_{K-2}>0$ with $\sum_{i=1}^{K-2}\alpha_{i}<1$, the corresponding $HLS_{K}(\pi,\nu,\alpha_{1},\ldots,\alpha_{K-2})$ sequence is Hoeffding decomposable while being neither i.i.d. nor P\'olya.
\end{theorem}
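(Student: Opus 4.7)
The proof splits naturally into two halves. The first assertion --- that the sequence is neither i.i.d. nor P\'olya --- is straightforward from the form of $\gamma$ in (\ref{genHLS}): the Dirac factors $\delta_{\alpha_{i}(1-\theta_{1})}(\text{d}\theta_{i+1})$ force $\gamma$ to concentrate on the one-dimensional arc
\begin{equation*}
\{(\theta_{1}, \alpha_{1}(1-\theta_{1}), \ldots, \alpha_{K-2}(1-\theta_{1})) : \theta_{1} \in (0,1)\} \subset \Theta_{K-1},
\end{equation*}
while the residual density $\theta_{1}^{\pi-1}(1-\theta_{1})^{\nu-1}$ shows that $\gamma$ does not concentrate at a single point of this arc. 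Hence $\gamma$ is singular with respect to the Lebesgue measure on $\Theta_{K-1}$ (ruling out a Dirichlet distribution, i.e.\ a P\'olya law) and is not a point mass (ruling out i.i.d.); in particular the sequence is non-deterministic.

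For Hoeffding decomposability, the plan is to combine Theorem \ref{t:eldkp}-(I) with the combinatorial characterization stated in Theorem \ref{omaraus_K}. One first verifies the non-triviality hypothesis (\ref{theorem0}), which can be done by noting that the marginal $\gamma$-law of $\theta_{1}$ is the non-degenerate $\text{Beta}(\pi,\nu)$ distribution: this furnishes non-zero completely degenerate kernels at every order, for instance by lifting those already available for the two-color P\'olya marginal that distinguishes $d_{1}$ from the other colors. With (\ref{theorem0}) in force, Theorem \ref{t:eldkp}-(I) reduces Hoeffding decomposability to weak independence, and Theorem \ref{omaraus_K} in turn reduces weak independence to a family of combinatorial identities involving the system of predictive probabilities of $\mathbf{X}_{[1,\infty)}$.

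The decisive computation is then to evaluate these predictive probabilities from (\ref{probHLS}). A direct ratio of joint laws yields
\begin{equation*}
\mathbb{P}(X_{n+1} = d_{1} \mid \mathbf{X}_{[n]}) = \frac{z_{1} + \pi}{n + \pi + \nu}, \qquad \mathbb{P}(X_{n+1} = d_{j} \mid \mathbf{X}_{[n]}) = t_{j} \cdot \frac{n - z_{1} + \nu}{n + \pi + \nu} \quad (j = 2, \ldots, K),
\end{equation*}
where $t_{j} = \alpha_{j-1}$ for $j = 2, \ldots, K-1$ and $t_{K} = 1 - \alpha$. These formulas display the product structure anticipated in Section \ref{s:mainresults}: the chance of drawing $d_{1}$ obeys the two-color P\'olya predictive rule with parameters $(\pi, \nu)$, and conditionally on the new symbol lying in $\{d_{2}, \ldots, d_{K}\}$ its label is selected by an independent multinomial draw with probabilities $(t_{2}, \ldots, t_{K})$ which does not depend on the past.

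The main obstacle is to translate this product structure into a verification of the identities of Theorem \ref{omaraus_K}. The strategy I would follow is to factor each combinatorial relation into a \emph{P\'olya component} and an \emph{i.i.d. component}. Concretely, after collapsing the colors $\{d_{2}, \ldots, d_{K}\}$ into a single one, the resulting two-color sequence is a P\'olya with parameters $(\pi, \nu)$, which is weakly independent by Theorem \ref{t:eldkp}-(II) and so satisfies the two-color form of the combinatorial identities; conditionally on the pattern of $d_{1}$'s, the restriction of the sequence to the non-$d_{1}$ positions is i.i.d.\ multinomial, for which the corresponding identities hold trivially. Assembling these two observations --- while tracking the multinomial coefficients $\prod_{i} \alpha_{i}^{z_{i+1}}$ and the Beta-function ratio $B(z_{1}+\pi, n-z_{1}+\nu)/B(\pi,\nu)$ appearing in (\ref{probHLS}) --- should deliver the full $K$-color criterion of Theorem \ref{omaraus_K}, and hence the Hoeffding decomposability of the $HLS_{K}$ sequence.
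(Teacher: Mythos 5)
Your preliminary steps are sound and coincide with the paper's: the argument that $\gamma$ in (\ref{genHLS}) is singular and non-degenerate (hence neither P\'olya nor i.i.d.), the predictive probabilities you derive from (\ref{probHLS}), and the reduction to the combinatorial criterion of Theorem \ref{omaraus_K} are all correct. (Your separate verification of (\ref{theorem0}) is unnecessary: it follows from (\ref{for:nd}) via the cited Proposition 6 of El-Dakkak and Peccati [2008], as noted in the derivation of Theorem \ref{omaraus_K}.) The genuine gap is in your final paragraph: the decisive step --- actually showing that the sums (\ref{ventidue_K}) vanish --- is replaced by a strategy statement (``should deliver''), and that strategy, as formulated, does not constitute a proof. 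Weak independence is not known to be preserved under collapsing colors or under conditioning on the positions of the $d_{1}$'s, and the identity (\ref{ventidue_K}) does not split a priori into a two-color identity times a multinomial identity: the truncated coefficient $\binom{k_{1}+q_{1}}{m-k_{2}-q_{2}}_{\ast}$ couples the ``P\'olya variable'' $q_{1}$ with the ``multinomial variable'' $q_{2}$, so there is no automatic factorization to assemble.

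What the paper actually supplies (and what your sketch is missing) is the following chain, carried out for each fixed coherent vector $(k_{1},\ldots,k_{K-1})$ separately --- a stronger claim than the vanishing of the whole outer sum. First, the $\alpha$-dependent part of the ratio $\mathbb{P}_{n+u-1}(\cdot)/\mathbb{P}_{n}(\cdot)$ is a constant $C(\alpha,u,z_{1},z_{2},k_{1},k_{2})$ free of $(q_{1},q_{2})$ and factors out. Second --- and this is the nontrivial combinatorial point, equation (\ref{sorpresa}) --- one must prove
\[
\sum_{q_{2}=0}^{u-q_{1}}\binom{u-q_{1}}{q_{2}}\binom{k_{1}+q_{1}}{m-k_{2}-q_{2}}_{\ast}=\binom{k_{1}+u}{m-k_{2}}_{\ast},
\]
i.e.\ that the inner $q_{2}$-sum is \emph{independent of} $q_{1}$; because of the truncation convention (\ref{binomast}) this requires a case analysis (Vandermonde when $0\leq m-k_{2}\leq u-q_{1}$, an iterated Pascal's-triangle argument when $m-k_{2}\geq u-q_{1}$). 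Only after this collapse does the problem reduce to the alternating sum over $q_{1}$ of Beta-function ratios, whose vanishing (Lemma \ref{sommedentro}) is the one place where the two-color P\'olya analysis is genuinely imported. Without establishing (\ref{sorpresa}), your proposed assembly of the ``P\'olya component'' and the ``i.i.d.\ component'' has no mechanism to go through.
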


Section \ref{s:maintools} contains a combinatorial characterization of Hoeffding decomposability in the framework of exchangeable sequences taking values in a finite set with $K\geq 3$ elements. Such a result will be our main tool in the proof of Theorem \ref{t:main}, as detailed in the subsequent Section \ref{s:proof}.

\section{A combinatorial characterization of Hoeffding-decomposability on finite spaces}\label{s:maintools}

\subsection{Framework}
Let $\mathbf{X}_{[1,\infty)}:=\left\{ X_{n}:n\geq
1\right\} $ be a sequence of exchangeable random variables with
values in $D=\left\{ d_{1},...,d_{K}\right\},$ $K\geq 3$. Let $\gamma$ be the de
Finetti measure associated with $\mathbf{X}_{[1,\infty)}$.
Throughout this section, we will systematically assume that $\mathbf{X}_{[1,\infty)}$ is such that
\begin{equation}
\label{for:nd}
\mathbb{P}(\mathbf{X}_{[n]}=\mathbf{x}_n)>0.\qquad\forall\mathbf{x}_n\in D^n,\quad\forall n\geq1.
\end{equation}

\medskip

\noindent In the sequel, we will adopt the following notation: let $\mathcal{%
N}\left( n,K\right) $ denote the set of weak $K$-compositions of $n,$ that
is the collection of all vectors $\mathbf{i}_{K}=\left(
i_{1},...,i_{K}\right) \in \mathbb{N}^{K}$ such that $\sum_{j=1}^{K}i_{j}=n.$
For each $n\geq 1$ and each $\mathbf{i}_{K}\in \mathcal{N}\left( n,K\right)
, $ define the set%
\begin{equation*}
C\left( n,\mathbf{i}_{K}\right) := \left\{ \mathbf{x}_{n}\in
D^{n}:\sum_{h=1}^{n}\mathbf{1}\left( x_{h}=d_{j}\right) =i_{j},\text{ }%
j=1,...,K\right\} .
\end{equation*}%
By exchangeability of $\mathbf{X}_{\left[ 1,\infty \right) }$ and symmetry
of all $\varphi \in \Xi _{n}\left( \mathbf{X}_{\left[ 1,\infty \right)
}\right) $, for all $n\geq 2$ and all $\mathbf{i}_{K}\in \mathcal{N}\left(
n,K\right) ,$ the functions $\mathbf{x}_{n}\mapsto \mathbb{P}\left( \mathbf{X%
}_{\left[ n\right] }=\mathbf{x}_{n}\right) $ and $\mathbf{x}_{n}\mapsto
\varphi \left( \mathbf{x}_{n}\right) $ are constant on $C\left( n,\mathbf{i}%
_{K}\right) .$ The constant values taken by each of these functions will be
denoted, respectively, $\mathbb{P}_{n}\left( i_{1},...,i_{K-1}\right) $ and $%
\varphi _{n}\left( i_{1},...,i_{K-1}\right) .$ Note that the omission of the
last coordinate of the vector $\mathbf{i}_{K}$ comes from the fact that its
value is completely determined by those of the prevoius $K-1$ coordinates.

\begin{remark}[On multinomial coefficients]{\rm Consider integers $m\geq 1$ and $b_1,...,b_k \geq 0$ such that $\sum b_i \leq m$.  In what follows we adopt the notation
$ \binom{m}{b_1,\cdot \cdot \cdot , \text{ }b_k}$ in order to indicate the multinomial coefficient
$$
\frac{m!} {b_1! \cdots b_k ! (m - \sum b_i)!}.
$$
We shall also use the following special ``star notation'':
\begin{equation}
\label{multinomast}
\binom{m}{b_1,\cdot \cdot \cdot , b_k}_{\ast}={\binom{m}{b_1}}_{\ast} {\binom{m-b_1}{b_2}}_{\ast} \cdots {\binom{ m- (b_1+\cdots +b_{k-1} ) }{b_k}}_{\ast},
\end{equation}
where
\begin{equation}
\label{binomast}
{\binom{a}{b}}_{\ast}=\binom{a}{b}\mathbf{1}_{\{0,\ldots,a\}}(b),
\end{equation}
and $\binom{a}{b}$ is the usual binomial coefficient. Note that $ \binom{m}{b_1,\cdot \cdot \cdot , \text{ }b_k} =  \binom{m}{b_1,\cdot \cdot \cdot , \text{ }b_k}_\ast$, whenever the binomial coefficients on the RHS of (\ref{binomast}) are all different from zero.
}
\end{remark}

\subsection{Two technical lemmas}

Our first technical result concerns the structure of the spaces $\Xi _{n}\left( \mathbf{X}_{\left[ 1,\infty
\right) }\right) $ introduced in Section \ref{ss:wi}.

\begin{lemma}\label{lemmozzo}
If $\mathbf{X}_{\left[ 1,\infty
\right) }$ is an exchangeable random sequence satisfying (\ref{for:nd}), then the vector space $\Xi _{n}\left( \mathbf{X}_{\left[ 1,\infty
\right) }\right) $ \textit{is the }$\left[ \binom{n+K-1}{K-1}-\binom{n+K-2}{K-1}\right]$-dimensional vector space spanned by the symmetric
kernels $\varphi_{n}^{\mathbf{m}_{K-2}}$, $\mathbf{m}_{K-2}=(m_{1},\ldots,m_{K-2})\in\bigcup\limits_{a=0}^{n}\mathcal{N}(a,K-2)$, such that, for each $\mathbf{m}_{K-2}=(m_{1},\ldots,m_{K-2})\in\bigcup\limits_{a=0}^{n}\mathcal{N}(a,K-2)$, and each $\mathbf{i}_{K}=(i_{1},\ldots,i_{K-1},i_{K})\in\mathcal{N}(n,K)$,
\begin{equation}
\varphi _{n}^{\mathbf{m}_{K-2}}\left( i_{1},...,i_{K-1}\right) =\left(
-1\right) ^{i_{1}}\binom{i_{1}}{m_{1}-i_{2}\cdot \cdot \cdot \text{ }%
m_{K-2}-i_{K-1}}_{\ast }\frac{\mathbb{P}_{n}\left(
0,m_{1},...,m_{K-2}\right) }{\mathbb{P}_{n}\left( i_{1},...,i_{K-1}\right) }.
\label{labbbase}
\end{equation}
%where
%\begin{equation}
%\label{multinomast}
%\binom{i_{1}}{m_{1}-i_{2},\cdot \cdot \cdot , \text{ }m_{K-2}-i_{K-1}}_{\ast}:={\binom{i_{1}}{m_{1}-i_{2}}}_{\ast}{\binom{i_{1}-m_{1}+i_{2}}{m_{2}-i_{3}}}_{\ast}\cdots{\binom{\Sigma _{1}^{K-3}\left( i_{j}-m_{j}\right) +i_{K-2}}{m_{K-2}-i_{K-1}}}_{\ast}
%\end{equation}
%and where
%\begin{equation}
%\label{binomast}
%{\binom{a}{b}}_{\ast}:=\binom{a}{b}\mathbf{1}_{\{0,\ldots,a\}}(b),
%\end{equation}
\end{lemma}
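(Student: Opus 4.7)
My plan is to reduce the defining condition of $\Xi_n$ to a finite linear system on the numerical unknowns $\{\varphi_n(\mathbf{i}_{K-1})\}$, exhibit the $\varphi_n^{\mathbf{m}_{K-2}}$ as explicit solutions via a multinomial Pascal identity, and close by a dimension count.

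First, I would translate $\mathbb{E}(\varphi(\mathbf{X}_{[n]})\mid \mathbf{X}_{[2,n]})=0$ into a system of scalar equations on $\{\varphi_n(\mathbf{i}_{K-1})\}_{\mathbf{i}_K\in\mathcal{N}(n,K)}$. By Bayes' formula, exchangeability, symmetry and the positivity assumption (\ref{for:nd}), the condition is equivalent to the family of equations, indexed by $(j_1,\ldots,j_{K-1})$ with $\sum_{t=1}^{K-1}j_t\leq n-1$, of the form
\begin{equation*}
\mathbb{P}_n(j_1,\ldots,j_{K-1})\varphi_n(j_1,\ldots,j_{K-1}) + \sum_{l=1}^{K-1}\mathbb{P}_n(j_1,\ldots,j_l+1,\ldots,j_{K-1})\varphi_n(j_1,\ldots,j_l+1,\ldots,j_{K-1})=0.
\end{equation*}
Thus $\Xi_n$ is the kernel of a linear map $\mathbb{R}^{\binom{n+K-1}{K-1}}\to\mathbb{R}^{\binom{n+K-2}{K-1}}$, and the rank-nullity bound $\dim\Xi_n\geq \binom{n+K-1}{K-1}-\binom{n+K-2}{K-1}=\binom{n+K-2}{K-2}$ (Pascal's rule) holds trivially.

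Second, I would verify that each $\varphi_n^{\mathbf{m}_{K-2}}$ defined in (\ref{labbbase}) satisfies this system. Upon substitution the weights $\mathbb{P}_n$ cancel against the denominator in (\ref{labbbase}), and after factoring out the common nonzero constant $(-1)^{j_1}\mathbb{P}_n(0,m_1,\ldots,m_{K-2})$ the constraint reduces to the multinomial Pascal identity
\begin{equation*}
\binom{j_1+1}{m_1-j_2,\ldots,m_{K-2}-j_{K-1}}_{\ast} = \binom{j_1}{m_1-j_2,\ldots,m_{K-2}-j_{K-1}}_{\ast} + \sum_{t=1}^{K-2}\binom{j_1}{m_1-j_2,\ldots,m_t-j_{t+1}-1,\ldots,m_{K-2}-j_{K-1}}_{\ast},
\end{equation*}
which is the usual recursion obtained by conditioning on which bin receives the $(j_1+1)$-st object. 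The star notation (\ref{multinomast})--(\ref{binomast}) absorbs uniformly the boundary cases in which some $m_t-j_{t+1}$ is negative or exceeds $j_1$.

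Third, linear independence of the family $\{\varphi_n^{\mathbf{m}_{K-2}}\}$ follows by an evaluation argument: since $\binom{0}{b_1,\ldots,b_{K-2}}_{\ast}$ vanishes unless every $b_t$ equals zero, one checks directly that $\varphi_n^{\mathbf{m}'_{K-2}}(0,m_1,\ldots,m_{K-2})=\mathbf{1}_{\{\mathbf{m}'_{K-2}=\mathbf{m}_{K-2}\}}$, so the evaluation matrix on the canonical points $(0,\mathbf{m}_{K-2})$ is the identity. Counting the family via the hockey-stick identity yields $\bigl|\bigcup_{a=0}^n\mathcal{N}(a,K-2)\bigr|=\sum_{a=0}^n\binom{a+K-3}{K-3}=\binom{n+K-2}{K-2}$, which matches the codimension bound from step one. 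Hence that bound is tight, the $\binom{n+K-2}{K-1}$ defining equations are in fact all independent, and the $\varphi_n^{\mathbf{m}_{K-2}}$ form a basis of $\Xi_n$. I expect the main obstacle to be step two: the form (\ref{labbbase}) is engineered precisely so that the $\mathbb{P}_n$-weights cancel, but the bookkeeping must distinguish carefully the asymmetric role played by the first color $d_1$ (producing the sign $(-1)^{j_1}$ and the pivotal top index $j_1$ of the star multinomial) from the remaining colors $d_2,\ldots,d_{K-1}$ (which determine the lower entries), and the star convention must be used consistently so that the Pascal identity remains valid on the boundary of the composition simplex.
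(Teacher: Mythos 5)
Your first two steps are sound and essentially reproduce the paper's starting point: the defining condition of $\Xi_n$ is exactly the linear system you write down (the paper's formula (\ref{shift_operator}), rearranged), and your verification that each $\varphi_n^{\mathbf{m}_{K-2}}$ solves it via the multinomial Pascal recursion, with the star convention absorbing the boundary cases, is a correct --- and arguably cleaner --- alternative to the paper's route, which instead \emph{constructs} formula (\ref{labbbase}) by iterating the recursion $i_1$ times and counting the resulting iterates with multinomial coefficients. Your evaluation computation $\varphi_n^{\mathbf{m}'_{K-2}}(0,m_1,\ldots,m_{K-2})=\mathbf{1}_{\{\mathbf{m}'_{K-2}=\mathbf{m}_{K-2}\}}$, which is the paper's relation (\ref{caraterisasiun}) in disguise, correctly gives linear independence, and the hockey-stick count of the family is right.

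The gap is in your concluding dimension argument. Rank--nullity gives only the lower bound $\dim\Xi_n\geq\binom{n+K-1}{K-1}-\binom{n+K-2}{K-1}=\binom{n+K-2}{K-2}$, and exhibiting $\binom{n+K-2}{K-2}$ linearly independent elements of the kernel establishes the \emph{same} lower bound a second time; the inference ``the count matches the codimension bound, hence that bound is tight'' is a non sequitur, and neither the independence of the $\binom{n+K-2}{K-1}$ defining equations nor the spanning property follows from what you have shown. What is missing is the upper bound $\dim\Xi_n\leq\binom{n+K-2}{K-2}$, equivalently that an element of $\Xi_n$ is determined by its values at the points $(0,\mathbf{m}_{K-2})$. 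This is precisely the content of the paper's recursion: since $\mathbb{P}_n>0$ by (\ref{for:nd}), each of your equations can be solved for $\varphi_n(j_1+1,j_2,\ldots,j_{K-1})$ in terms of values whose first coordinate is $j_1$, so induction on the first coordinate shows that the evaluation map $\Xi_n\to\mathfrak{R}^{A}$, with $A=\bigcup_{a=0}^{n}\mathcal{N}(a,K-2)$, is injective (the paper makes this quantitative in (\ref{orribileasterisco})). Alternatively you may simply quote the dimension formula from Proposition 6 of El-Dakkak and Peccati [2008], as the paper itself does. With either addition your argument closes; without one of them, the lemma's two main assertions --- the value of $\dim\Xi_n$ and the fact that the $\varphi_n^{\mathbf{m}_{K-2}}$ span --- remain unproved.
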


\begin{proof}
The fact that
$$\dim(\Xi_{n}(\mathbf{X}_{[1,\infty)}))=\binom{n+K-1}{K-1}-\binom{n+K-2}{K-1}$$
follows from Proposition 6 in El-Dakkak and Peccati [2008].
In order to prove the rest of the statement, we will show that the collection
$$\Phi_{n}:=\left\{\varphi_{n}^{\mathbf{m}_{K-2}}\colon \mathbf{m}_{K-2}\in\bigcup_{a=0}^{n}\mathcal{N}(a,K-2)\right\},$$
is indeed a basis of the vector space $\Xi_{n}(\mathbf{X}_{[1,\infty)})$. To do that, we will first show that, for each $\mathbf{i}_{K}\in\{\mathbf{i}_{K}=(i_{1},\ldots,i_{K})\in\mathcal{N}(n,K):i_{1}\geq1\}$, there exists a {\it linear} mapping $f_{\mathbf{i}_{K}}\colon \mathfrak{R}^A \to\mathfrak{R}$, where $A = \bigcup\limits_{a=0}^{n}\mathcal{N}(a,K-2)$, such that, for all $\varphi_{n}\in\Xi_{n}(\mathbf{X}_{[1,\infty)})$,
\begin{equation}
\label{rapresentasiun}
\varphi_{n}(i_{1},\ldots,i_{K-1})=f_{\mathbf{i}_{K}}\left(\varphi_{n}(0,\mathbf{m}_{K-2}) : \mathbf{m}_{K-2}\in\bigcup_{a=0}^{n}\mathcal{N}(a,K-2)\right).
\end{equation}
Once the explicit representation (\ref{rapresentasiun}) will be at hand, the characterization of $\Phi_{n}$ as a basis will be deduced from the fact that, for $\mathbf{m}_{K-2}$ in $\bigcup\limits_{a=0}^{n}\mathcal{N}(a,K-2)$, the functions $\varphi_{n}^{\mathbf{m}_{K-2}}$ appearing in the statement verify the relation
\begin{equation}
\label{caraterisasiun}
\varphi_{n}^{\mathbf{m}_{K-2}}(i_{1},\ldots,i_{K})=f_{\mathbf{i}_{K}}\left(\mathbf{1}_{\{\mathbf{m}_{K-2}\}}(\mathbf{y}_{K-2}) : \mathbf{y}_{K-2}\in\bigcup_{a=0}^{n}\mathcal{N}(a,K-2)\right).
\end{equation}

\smallskip

\noindent Let $\varphi _{n}\in \Xi _{n}\left( \mathbf{X}_{\left[
1,\infty \right) }\right) .$ It turns out that, for all $\mathbf{i}_{K}\in \{\mathbf{i}_{K}=(i_{1},\ldots,i_{K})\in\mathcal{N}\left( n,K\right):i_{1}\geq 1\}$,
\begin{equation}
\varphi _{n}\left( i_{1},...,i_{K-1}\right) =-\frac{1}{\mathbb{P}_{n}\left(
i_{1},...,i_{K-1}\right) }\sum_{j_{1}=2}^{K}\varphi _{n}\cdot \mathbb{P}%
_{n}\left( \mu _{1}^{j_{1}}\left( i_{1},...,i_{K-1}\right) \right) ,
\label{shift_operator}
\end{equation}%
where $\varphi _{n}\cdot \mathbb{P}_{n}\left( \cdot \right) :=
\varphi _{n}\left( \cdot \right) \mathbb{P}_{n}\left( \cdot \right) ,$
where, for $1\leq l<p\leq K-1,$%
\begin{equation*}
\mu _{l}^{p}\left( i_{1},...,i_{K-1}\right) := \left(
i_{1},...,i_{l-1},i_{l}-1,i_{l+1},...,i_{p-1},i_{p}+1,i_{p+1},...,i_{K-1}%
\right) ,
\end{equation*}%
and where for $1\leq l\leq K-1,$%
\begin{equation*}
\mu _{l}^{K}\left( i_{1},...,i_{K-1}\right) := \left(
i_{1},...,i_{l-1},i_{l}-1,i_{l+1},...,i_{K-1}\right) .
\end{equation*}%
Before proving formula (\ref{shift_operator}), we make some remarks and
give simple examples regarding our notation in order to better clarify
it. For $1\leq l<p\leq K-1,$ the action of the operator $\mu _{l}^{p}$
consists in substracting $1$ from the $l$-th coordinate of the vector $%
\left( i_{1},...,i_{K-1}\right) $ an adding $1$ to the $p$-th coordinate.
For example:%
\begin{equation*}
\mu _{2}^{4}\left( 2,7,5,9,4\right) =\left( 2,6,5,10,4\right) .
\end{equation*}%
On the other hand, when $1\leq l\leq K-1$ and $p=K,$ the action of the
operator $\mu _{l}^{K}$ consists in just substracting $1$ from the $l$-th
coordinate. This is consistent with our conventions since we are ommitting
the $K$-th coordinate of the vectors $\left( i_{1},...,i_{K}\right) $; in
other words the the $1$ substracted from the $l$-th coordinate is actually
added the the last coordinate whose value we are ommitting since it is
completely determined by the values of the previous ones.

\medskip

\noindent We shall now prove formula (\ref{shift_operator}). Fix $%
n\geq 2$ and $\varphi _{n}\in \Xi _{n}\left( \mathbf{X}_{\left[ 1,\infty
\right) }\right) .$ By the definition of $\Xi _{n}\left( \mathbf{X}_{\left[
1,\infty \right) }\right) ,$ we must have%
\begin{equation*}
E\left( \varphi _{n}\left( \mathbf{X}_{\left[ n\right] }\right) \text{ }|%
\text{ }\mathbf{X}_{\left[ 2,n\right] }\right) =0.
\end{equation*}%
Then, for any arbitrarily fixed $\mathbf{x}_{n-1}\in D^{n-1},$%
\begin{equation*}
\sum_{i=1}^{K}\varphi _{n}\left( d_{i},x_{2},...,x_{n-1}\right) \frac{%
\mathbb{P}_{n}\left( X_{1}=d_{i},\mathbf{X}_{\left[ 2,n\right] }=\mathbf{x}%
_{n-1}\right) }{\mathbb{P}_{n-1}\left( \mathbf{X}_{\left[ 2,n\right] }=%
\mathbf{x}_{n-1}\right) }=0.
\end{equation*}%
Suppose $\mathbf{x}_{n-1}\in C\left( n-1,\mathbf{h}_{K}\right) ,$ for some $%
\mathbf{h}_{K}\in \mathcal{N}\left( n-1,K\right) .$ Then, by (\ref{for:nd}), the just-stated
formula is equivalent to%
\begin{equation*}
\varphi _{n}\cdot \mathbb{P}_{n}\left( h_{1}+1,h_{2},...,h_{K-1}\right)
+\sum_{j_{1}=2}^{K}\varphi _{n}\cdot \mathbb{P}_{n}\left( \mu
_{1}^{j_{1}}\left( h_{1}+1,h_{2},...,h_{K-1}\right) \right) =0,
\end{equation*}%
thus proving that (\ref{shift_operator}) holds for the vector $\mathbf{i}_{K}^{*}=\left( i_{1},...,i_{K}\right) := \left(
h_{1}+1,h_{2},...,h_{K}\right) $. Clearly, $\mathbf{i}_{K}^{*}\in\{\mathbf{i}_{K}=(i_{1},\ldots,i_{K})\in\mathcal{N}\left( n,K\right):i_{1}\geq 1\}$. To see that (\ref{shift_operator}) holds for all $\mathbf{i}_{K}\in \{\mathbf{i}_{K}=(i_{1},\ldots,i_{K})\in\mathcal{N}\left(
n,K\right):i_{1}\geq 1\}$, observe that for any such $\mathbf{i}_{K},$
there exists $\mathbf{h}_{K}\in \mathcal{N}\left( n-1,K\right) $ such that $%
\mathbf{i}_{K}=\left( h_{1}+1,...,h_{K}\right) $. This proves also that%
\begin{equation}
card\left( \left\{ \mathbf{i}_{K}=\left( i_{1},...,i_{K}\right) \in \mathcal{%
N}\left( n,K\right) :i_{1}\geq 1\right\} \right) =card\left( \mathcal{N}%
\left( n-1,K\right) \right) .  \label{quandebello}
\end{equation}

\medskip

\noindent Now, recursion in (\ref{shift_operator}) gives%
\begin{equation}
\varphi _{n}\left( i_{1},...,i_{K-1}\right) =\frac{\left( -1\right) ^{i_{1}}%
}{\mathbb{P}_{n}\left( i_{1},...,i_{K-1}\right) }\sum_{j_{1}=2}^{K}\cdot
\cdot \cdot \sum_{j_{i_{1}}=2}^{K}\varphi _{n}\cdot \mathbb{P}_{n}\left( \mu
_{1}^{j_{i_{1}}}\circ \cdot \cdot \cdot \circ \text{ }\mu _{1}^{j_{1}}\left(
i_{1},...,i_{K-1}\right) \right) ,  \label{recursion}
\end{equation}%
where the operator $\mu _{1}^{j_{i_{1}}}\circ \cdot \cdot \cdot \circ $ $\mu
_{1}^{j_{1}}$ denotes the successive iteration of operators $\mu
_{1}^{j_{1}},...,\mu _{1}^{j_{i_{1}}}.$ For example,%
\begin{equation*}
\mu _{1}^{2}\circ \mu _{1}^{3}\circ \mu _{1}^{2}\circ \mu _{1}^{4}\left(
4,7,5,4,9\right) =\mu _{1}^{2}\left( \mu _{1}^{3}\left( \mu _{1}^{2}\left(
\mu _{1}^{4}\left( 4,7,5,4,9\right) \right) \right) \right) =\left(
0,9,6,5,9\right) .
\end{equation*}%
To see this, fix $j_{1}\in \left\{ 2,...,K\right\} $ and apply (\ref%
{shift_operator}) to $\varphi _{n}\left( \mu _{1}^{j_{1}}\left(
i_{1},...,i_{K-1}\right) \right) $ to obtain%
\begin{equation*}
\varphi _{n}\left( \mu _{1}^{j_{1}}\left( i_{1},...,i_{K-1}\right) \right) =-%
\frac{1}{\mathbb{P}_{n}\left( \mu _{1}^{j_{1}}\left(
i_{1},...,i_{K-1}\right) \right) }\sum_{j_{2}=2}^{K}\varphi _{n}\cdot
\mathbb{P}_{n}\left( \mu _{1}^{j_{2}}\circ \mu _{1}^{j_{1}}\left(
i_{1},...,i_{K-1}\right) \right) .
\end{equation*}%
Do that for all $j_{1}\in \left\{ 2,...,K\right\} $ and plug in (\ref%
{shift_operator}) to obtain%
\begin{equation*}
\varphi _{n}\left( i_{1},...,i_{K-1}\right) =\frac{1}{\mathbb{P}_{n}\left(
i_{1},...,i_{K-1}\right) }\sum_{j_{1}=1}^{K}\sum_{j_{2}=2}^{K}\varphi
_{n}\cdot \mathbb{P}_{n}\left( \mu _{1}^{j_{2}}\circ \mu _{1}^{j_{1}}\left(
i_{1},...,i_{K-1}\right) \right) .
\end{equation*}%
Iterating the process $i_{1}$ times gives (\ref{recursion}).

\medskip

\noindent Next, observe that the term $\varphi _{n}\cdot \mathbb{P}%
_{n}\left( \mu _{1}^{j_{i_{1}}}\circ \cdot \cdot \cdot \circ \text{ }\mu
_{1}^{j_{1}}\left( i_{1},...,i_{K-1}\right) \right) $ is certainly of the
form%
\begin{equation}
\varphi _{n}\cdot \mathbb{P}_{n}\left(
0,i_{2}+b_{1},i_{3}+b_{2},....,i_{K-1}+b_{K-2}\right) ,
\label{termine_generico}
\end{equation}%
where $b_{v}=\sum_{t=1}^{i_{1}}\mathbf{1}\left( j_{t}=v+1\right) ,$ $%
v=1,...,K-2$ is the number of $1$'s substracted from the first coordinate of
$\left( i_{1},...,i_{K-1}\right) $ and added to coordinate $v+1.$ Note that $%
b_{K-1}$ (i.e. the number of $1$'s substracted from the first coordinate and
added to the last) is completely determined by the vector $\mathbf{b}%
_{K-2}=\left( b_{1},...,b_{K-2}\right) .$ Clearly, the operator $\mu
_{1}^{j_{i_{1}}}\circ \cdot \cdot \cdot \circ $ $\mu _{1}^{j_{1}}$ is
commutative in the sense that%
\begin{equation*}
\mu _{1}^{j_{i_{1}}}\circ \cdot \cdot \cdot \circ \text{ }\mu
_{1}^{j_{1}}=\mu _{1}^{j_{\sigma \left( i_{1}\right) }}\circ \cdot \cdot
\cdot \circ \text{ }\mu _{1}^{j_{\sigma \left( 1\right) }},
\end{equation*}%
for any permutation $\sigma$ of $\left( 1,...,i_{1}\right) .$ It follows that, for
any fixed $\left( b_{1},...,b_{K-2}\right) \in \bigcup\limits_{a=0}^{i_{1}}%
\mathcal{N}\left( a,K-2\right) $ the term (\ref{termine_generico}) occurs
exactly $\binom{i_{1}}{b_{1},b_{2}...,b_{K-2}}$ times in the sum described
in (\ref{recursion}). Consequently, (\ref{recursion}) can be rewritten as
follows:%
\begin{equation}
\varphi _{n}\left( i_{1},...,i_{K-1}\right) =\frac{\left( -1\right) ^{i_{1}}%
}{\mathbb{P}_{n}\left( i_{1},...,i_{K-1}\right) }\sum_{(+)}\binom{i_{1}}{b_{1},...,b_{K-2}}\varphi _{n}\cdot \mathbb{P}%
_{n}\left( 0,i_{2}+b_{1},i_{3}+b_{2}....,i_{K-1}+b_{K-2}\right) ,
\label{orribile}
\end{equation}
where the sum $(+)$ is extended to all vectors $\mathbf{b}_{K-2}=(b_{1},\ldots,b_{K-2})\in\bigcup\limits_{a=0}^{i_{1}}\mathcal{N}(a,K-2)$. Set $m_{p}:=b_{p}+i_{p+1}$, $p=1,\ldots,K-2$, and rewrite (\ref{orribile}) as
\begin{equation}
\label{orribileinemme}
\varphi_{n}(i_{1},\ldots,i_{K-1})=\frac{(-1)^{i_{1}}}{\mathbb{P}_{n}(i_{1},\ldots,i_{K-1})}\sum_{(\sharp)}\binom{i_{1}}{m_{1}-i_{2},\ldots,m_{K-2}-i_{K-1}}\varphi_{n}\cdot\mathbb{P}_{n}(0,m_{1},\ldots,m_{K-2}),
\end{equation}
where the sum $(\sharp)$ is extended to all vectors $(m_{1},\ldots,m_{K-2})\in\bigcup\limits_{a=0}^{n}\mathcal{N}(a,K-2)$, such that

\begin{eqnarray*}
m_{1}&\in&\{i_{2},\ldots,i_{1}+i_{2}\},\\
m_{2}&\in&\{i_{3},\ldots,(i_{1}-m_{1})+i_{2}+i_{3}\}\\
m_{3}&\in&\{i_{4},\ldots,(i_{1}-m_{1})+(i_{2}-m_{2})+i_{3}+i_{4}\}\\
&\vdots&\\
m_{K-2}&\in&\left\{i_{K-1},\ldots,\sum_{v=1}^{K-3}(i_{v}-m_{v})+i_{K-2}+i_{K-1}\right\}.
\end{eqnarray*}

\noindent It is immediately seen that the multinomial coefficient in (\ref{orribileinemme}) is always well defined. It follows that (\ref{orribileinemme}) can be rewritten, using the convention defined in (\ref{multinomast}), as

\begin{equation}
\label{orribileasterisco}
\varphi_{n}(i_{1},\ldots,i_{K-1})=\frac{(-1)^{i_{1}}}{\mathbb{P}_{n}(i_{1},\ldots,i_{K-1})}\sum_{(=)}\binom{i_{1}}{m_{1}-i_{2},\ldots,m_{K-2}-i_{K-1}}_{\ast}\varphi_{n}\cdot\mathbb{P}_{n}(0,m_{1},\ldots,m_{K-2}),
\end{equation}
where the sum $(=)$ is extended to all vectors $\mathbf{m}_{K-2}=(m_{1},\ldots,m_{K-2})\in\bigcup\limits_{a=0}^{n}\mathcal{N}(a,K-2)$.
Since equality (\ref{orribileasterisco}) holds for any $\varphi_{n}\in\Xi_{n}(\mathbf{X}_{[1,\infty)})$ and any $\mathbf{i}_{K}\in\{\mathbf{i}_{K}=(i_{1},\ldots,i_{k})\in\mathcal{N}(n,K):i\geq1\}$, then we have proved (\ref{rapresentasiun}). The claim of the present step of the proof follows, now, immediately from (\ref{caraterisasiun}). \hfill
\end{proof}

Next, adapting the arguments rehearsed in the
proof of Lemma 3 of El-Dakkak and Peccati $[2008]$ yields the
following statement about symmetrizations.

\begin{lemma}
\label{lemma3_K}\textit{Fix }$m\geq 2$ \textit{and }$v\in \left\{
1,...,m-1\right\} $\textit{\ and let the application}%
\begin{equation*}
f_{v,m-v}:D^{m}\mapsto \mathfrak{R}:\left( x_{1},...,x_{m}\right) \mapsto
f_{v,m-v}\left( x_{1},...,x_{m}\right) ,
\end{equation*}%
\textit{where }$D=\left\{ d_{1},...,d_{K}\right\} ,$ \textit{be separately
symmetric in the variables }$\left( x_{1},...,x_{v}\right) $ \textit{and }$%
\left( x_{v+1},...,x_{m}\right) $ \textit{(and not necessarily as a function
on }$D^{m}$\textit{). Then, for any }$\mathbf{x}_{m}\in C\left( m,\mathbf{z}%
_{K}\right) $ \textit{for some }$\mathbf{z}_{K}=\left(
z_{1},...,z_{K}\right) \in \mathcal{N}\left( m,K\right) ,$ \textit{the
canonical symmetrization of }$f,$ \textit{denoted }$\tilde{f},$ \textit{%
reduces to}%
\begin{equation}
\tilde{f}\left( \mathbf{x}_{m}\right) =\frac{\sum_{\left( \ast \right) }%
\binom{v}{k_{1},k_{2},...,k_{K-1}}\binom{m-v}{%
z_{1}-k_{1},z_{2}-k_{2},...,z_{K-1}-k_{K-1}}f_{v,m-v}\left( \left(
k_{1},...,k_{K-1}\right) ,\left( z_{1}-k_{1},...,z_{K}-k_{K}\right) \right)
}{\sum_{\left( \ast \right) }\binom{v}{k_{1},k_{2},...,k_{K-1}}\binom{m-v}{%
z_{1}-k_{1},z_{2}-k_{2},...,z_{K-1}-k_{K-1}}},  \label{quindici_K}
\end{equation}%
\textit{where the sums }$\left( \ast \right) $ are extended to all the
vectors $\left( k_{1},...,k_{K-1}\right) $ with%
\begin{eqnarray*}
k_{1} &\in &\left\{ 0\vee \left[ z_{1}-\left( m-v\right) \right]
,...,z_{1}\wedge v\right\} , \\
k_{2} &\in &\left\{ 0\vee \left[ z_{2}-\left( m-v\right) -\left(
z_{1}-k_{1}\right) \right] ,...,z_{2}\wedge \left( v-k_{1}\right) \right\} ,
\\
&&\vdots \\
k_{K-1} &\in &\left\{ 0\vee \left[ z_{K-1}-\left( m-v\right) -\Sigma
_{1}^{K-2}\left( z_{p}-k_{p}\right) \right] ,...,z_{K-1}\wedge \left(
v-\Sigma _{1}^{K-2}k_{p}\right) \right\}
\end{eqnarray*}%
\textit{\ (the set of all such vectors will, in the sequel, be referred to
as the set of all }$\left( m,v,\mathbf{z}_{K}\right) $-coherent vectors)
\textit{and where }$f_{v,m-v}\left( \left( k_{1},...,k_{K-1}\right) ,\left(
z_{1}-k_{1},...,z_{K}-k_{K}\right) \right) $ \textit{denotes the common
value of }$f_{v,m-v}\left( \mathbf{y}_{m}\right) $ \textit{when }$\mathbf{y}%
_{m}=\left( y_{1},...,y_{m}\right) $ \textit{is} \textit{such that the
subvector} $\left( y_{1},...,y_{v}\right) $ \textit{contains exactly} $k_{i}$
\textit{coordinates equal to} $d_{i},$ $i=1,...,K,$ \textit{and the subvector%
} $\left( y_{v+1},...,y_{m}\right) $ \textit{contains exactly} $\left(
z_{i}-k_{i}\right) $ \textit{coordinates equal to} $d_{i},$ $i=1,...,K.$

As a consequence, $\tilde{f}_{v,m-v}\left( \mathbf{x}_{m}\right) =0$ for
every $\mathbf{x}_{m}\in D^{m}$ if, and only if, for all $\mathbf{z}%
_{K}=\left( z_{1},...,z_{K}\right) \in \mathcal{N}\left( m,K\right) ,$%
\begin{eqnarray}\notag
&& \sum_{\left( \ast \right) }\binom{v}{k_{1},k_{2},...,k_{K-1}}\binom{m-v}{%
z_{1}-k_{1},z_{2}-k_{2},...,z_{K-1}-k_{K-1}}\times \\
&&\quad\quad\quad\quad\quad\quad\quad\times f_{v,m-v}\left( \left(
k_{1},...,k_{K-1}\right) ,\left( z_{1}-k_{1},...,z_{K}-k_{K}\right) \right)
=0.  \label{sedici_K}
\end{eqnarray}
\end{lemma}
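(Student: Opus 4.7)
The plan is to compute $\tilde{f}_{v,m-v}(\mathbf{x}_m)$ directly from its definition as an average over $\mathfrak{S}_m$, and to exploit the separate symmetry of $f_{v,m-v}$ in its two blocks of variables to collapse the sum into one indexed by colorings $(k_1,\ldots,k_{K-1})$. Since $\tilde{f}_{v,m-v}$ is fully symmetric, its value is constant on each class $C(m,\mathbf{z}_K)$, so it suffices to fix one representative $\mathbf{x}_m$ with composition $\mathbf{z}_K$ and evaluate the average there.

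First, I would note that the stabilizer of $\mathbf{x}_m$ under the action of $\mathfrak{S}_m$ on positions has size $\prod_{i=1}^K z_i!$, so the orbit consists of $\binom{m}{z_1,\ldots,z_K}$ distinct reorderings, each counted with multiplicity $\prod_i z_i!$ in the sum defining $\tilde{f}_{v,m-v}$. For each coherent vector $(k_1,\ldots,k_{K-1})$, the number of distinct reorderings whose first block $(y_1,\ldots,y_v)$ contains exactly $k_i$ copies of $d_i$ is
\[
\binom{v}{k_1,\ldots,k_{K-1}}\binom{m-v}{z_1-k_1,\ldots,z_{K-1}-k_{K-1}},
\]
where the convention of Section~\ref{s:maintools} identifies $k_K=v-\sum_{i=1}^{K-1}k_i$ implicitly. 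The nested range conditions stated in the lemma are exactly the inequalities forcing $0\leq k_i\leq z_i$ together with $z_i-k_i\leq m-v-\sum_{j<i}(z_j-k_j)$, i.e.\ the constraints under which both multinomial coefficients are nonzero. By the block-symmetry hypothesis, $f_{v,m-v}$ takes the common value $f_{v,m-v}((k_1,\ldots,k_{K-1}),(z_1-k_1,\ldots,z_K-k_K))$ on every such reordering.

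Collecting terms and dividing by $m!$, the factor $\prod z_i!$ combines with $m!$ to produce $1/\binom{m}{z_1,\ldots,z_K}$, and the final step is to recognize that this inverse multinomial coefficient equals the reciprocal of
\[
\sum_{(\ast)}\binom{v}{k_1,\ldots,k_{K-1}}\binom{m-v}{z_1-k_1,\ldots,z_{K-1}-k_{K-1}}
\]
by the multi-dimensional Vandermonde identity obtained from partitioning a word of length $m$ over $D$ with multiplicities $z_i$ according to the composition of its first $v$ letters. Substituting this identity yields (\ref{quindici_K}). For the ``consequence'' part of the lemma, I would simply observe that the denominator of (\ref{quindici_K}) is a strictly positive integer for every $\mathbf{z}_K\in\mathcal{N}(m,K)$, so the vanishing of $\tilde{f}_{v,m-v}$ on all of $D^m$ is equivalent to the vanishing of the numerator for each $\mathbf{z}_K$, which is exactly (\ref{sedici_K}).

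The only genuinely delicate point is the bookkeeping of the coherence ranges for $(k_1,\ldots,k_{K-1})$: in the $K$-color setting these are nested inequalities rather than the single interval that appears in the two-color argument of El-Dakkak and Peccati $[2008]$. Since the combinatorial content of the orbit counting is otherwise unchanged, I expect the adaptation of the earlier proof to require essentially only a careful rewriting of the feasibility constraints, with no new structural ideas.
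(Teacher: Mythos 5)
Your argument is correct and is essentially the proof the paper has in mind: the paper itself gives no details here, simply stating that the lemma follows by ``adapting the arguments rehearsed in the proof of Lemma 3 of El-Dakkak and Peccati [2008]'', and that adaptation is exactly your orbit-counting computation (grouping the $m!$ permutations by the color composition of the first $v$ slots, using block-symmetry to collapse each group to a single value, and invoking the multinomial Vandermonde identity to identify the normalizing constant). Your identification of the coherence ranges with the joint nonvanishing of the two multinomial coefficients, and the positivity of the denominator for the ``consequence'' part, are both accurate, so nothing is missing.
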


\subsection{The characterization}

We are now ready to prove the announced full
characterization of $D$-valued Hoeffding decomposable exchangeable
sequences satisfying (\ref{for:nd}), where $D=\left\{ d_{1},...,d_{K}\right\} $. To this end, recall
that, for every symmetric $\varphi :D^{n}\rightarrow \mathfrak{R},$ every $%
u=2,...,n$ and every $\mathbf{x}_{n-1}=\left( x_{1},...,x_{n-1}\right) \in
D^{n-1},$%
\begin{equation*}
\left[ \varphi \right] _{n,n-1}^{\left( n-u\right) }\left( \mathbf{x}%
_{n-1}\right) =\mathbb{E}\left( \varphi \left( \mathbf{X}_{\left[ n\right]
}\right) \text{ }|\text{ }\mathbf{X}_{\left[ u+1,u+n-1\right] }=\mathbf{x}%
_{n-1}\right) .
\end{equation*}%
Observe that the function $\left[ \varphi \right] _{n,n-1}^{\left(
n-u\right) }:D^{n-1}\rightarrow \mathfrak{R}$ clearly meets the symmetry
properties of Lemma \ref{lemma3_K} with $m=n-1$ and $v=n-u.$ Now, fix $%
\mathbf{z}_{K}=\left( z_{1},...,z_{K}\right) \in \mathcal{N}\left(
n-1,K\right) $ and suppose $\mathbf{x}_{n-1}=\left( x_{1},...,x_{n-1}\right)
\in C\left( n-1,\mathbf{z}_{K}\right) $ is such that $\sum_{t=1}^{n-u}%
\mathbf{1}\left( x_{t}=d_{p}\right) =k_{p},$ $p=1,...K-1.$ Then

\begin{eqnarray}
\notag
\left[ \varphi \right] _{n,n-1}^{\left( n-u\right) }\left( \mathbf{x}%
_{n-1}\right)& =& \sum_{q_{1}=0}^{u}\sum_{q_{2}=0}^{u-q_{1}}\cdot \cdot \cdot
\sum_{q_{K-1}=1}^{u-\Sigma _{1}^{K-2}q_{j}}\binom{u}{q_{1},...,q_{K-1}}
\varphi _{n}\left( k_{1}+q_{1},...,k_{K-1}+q_{K-1}\right) \times
 \\
&&\quad\quad\quad \quad \quad \times  \frac{\mathbb{P}_{n+u-1}\left( z_{1}+q_{1},...,z_{K-1}+q_{K-1}\right)}{\mathbb{P}
_{n-1}\left( z_{1},...,z_{K-1}\right) }.  \label{diciassette_K}
\end{eqnarray}%
By applying (\ref{sedici_K}) in the case $m=n-1,$ we deduce that $\widetilde{%
\left[ \varphi \right] }_{n,n-1}^{\left( n-u\right) }\left( \mathbf{x}%
_{n-1}\right) =0$ if and only if
\begin{eqnarray}\notag
&& \sum_{\left( \ast \right) }\binom{n-u}{k_{1},k_{2},...,k_{K-1}}\binom{u-1}{%
z_{1}-k_{1},z_{2}-k_{2},...,z_{K-1}-k_{K-1}}\times  \\
&&\quad\quad\quad \times \left[ \varphi \right]
_{n,n-1}^{\left( n-u\right) }\left( \left( k_{1},...,k_{K-1}\right) ,\left(
z_{1}-k_{1},...,z_{K}-k_{K}\right) \right) =0,  \label{diciotto_K}
\end{eqnarray}%
where the sum $\left( \ast \right) $ is extended to all $\left( n-1,n-u,%
\mathbf{z}_{K}\right) $-coherent vectors $\left( k_{1},...,k_{K-1}\right) ,$
i.e.%
\begin{eqnarray*}
k_{1} &\in &\left\{ 0\vee \left[ z_{1}-\left( u-1\right) \right]
,...,z_{1}\wedge \left( n-u\right) \right\} , \\
k_{2} &\in &\left\{ 0\vee \left[ z_{2}-\left( u-1\right) -\left(
z_{1}-k_{1}\right) \right] ,...,z_{2}\wedge \left( n-u-k_{1}\right) \right\}
, \\
&&\vdots \\
k_{K-1} &\in &\left\{ 0\vee \left[ z_{K-1}-\left( u-1\right) -\Sigma
_{1}^{K-2}\left( z_{p}-k_{p}\right) \right] ,...,z_{K-1}\wedge \left(
n-u-\Sigma _{1}^{K-2}k_{p}\right) \right\} ,
\end{eqnarray*}%
and where the notation $\left[ \varphi \right] _{n,n-1}^{\left( n-u\right)
}\left( \left( k_{1},...,k_{K-1}\right) ,\left(
z_{1}-k_{1},...,z_{K}-k_{K}\right) \right) $ has been introduced to indicate
the common value of $\left[ \varphi \right] _{n,n-1}^{\left( n-u\right)
}\left( \mathbf{x}_{n-1}\right) ,$ for all $\mathbf{x}_{n-1}\left(
x_{1},...,x_{n-1}\right) \in C\left( n-1,\mathbf{z}_{K}\right) $ such that $%
\sum_{t=1}^{n-u}\mathbf{1}\left( x_{t}=d_{p}\right) =k_{p},$ $p=1,...,K-1.$

Now recall that, by Theorem \ref{t:eldkp}-(I),
Hoeffding decomposability and weak independence are equivalent provided
condition (\ref{theorem0}) is verified. The fact that such condition is
verified in our case is a consequence of Point 2 of Proposition 6 in Peccati
and El-Dakkak $\left[ 2008\right] .$ Moreover, $\mathbf{X}_{\left[ 1,\infty
\right) }$ is weakly independent if, and only if, for all $n\geq 2$ and all $%
\varphi _{n}\in \Xi _{n}\left( \mathbf{X}_{\left[ 1,\infty \right) }\right)
, $ one has $\varphi _{n}\in \tilde{\Xi}_{n,n-u}\left( \mathbf{X}_{\left[
1,\infty \right) }\right) ,$ for all $u=2,...,n.$ By Lemma \ref{lemmozzo}, this implies that for every $\mathbf{m}_{K-2}\in \bigcup\limits_{a=0}^{n}%
\mathcal{N}\left( a,K-2\right) ,$ the corresponding basis function $\varphi
_{n}^{\mathbf{m}_{K-2}}$ belongs to $\tilde{\Xi}_{n,n-u}\left( \mathbf{X}_{%
\left[ 1,\infty \right) }\right) .$ On the other hand since any $\varphi
_{n}\in \Xi _{n}\left( \mathbf{X}_{\left[ 1,\infty \right) }\right) $ is a
linear combination of the basis functions $\varphi _{n}^{\mathbf{m}_{K-2}},$
we deduce that weak independence occurs if, and only if, for all $\mathbf{m}%
_{K-2}\in \bigcup\limits_{a=0}^{n}\mathcal{N}\left( a,K-2\right) ,$ $%
\varphi _{n}^{\mathbf{m}_{K-2}}\in \tilde{\Xi}_{n,n-u}\left( \mathbf{X}_{%
\left[ 1,\infty \right) }\right) .$ In other words, weak independence occurs
if, and only if, for all $n\geq 2,$ all $u=2,...,n$, all $\mathbf{z}%
_{K}=\left( z_{1},...,z_{K}\right) \in \mathcal{N}\left( n-1,K\right) ,$ and
all $\mathbf{m}_{K-2}\in \bigcup\limits_{a=0}^{n}\mathcal{N}\left(
a,K-2\right) $%
\begin{eqnarray}\notag
&& \sum_{\left( \ast \right) }\binom{n-u}{k_{1},k_{2},...,k_{K-1}}\binom{u-1}{%
z_{1}-k_{1},z_{2}-k_{2},...,z_{K-1}-k_{K-1}}\times \\
&& \quad\quad\quad \times \left[ \varphi _{n}^{\mathbf{m}%
_{K-2}}\right] _{n,n-1}^{\left( n-u\right) }\left( \left(
k_{1},...,k_{K-1}\right) ,\left( z_{1}-k_{1},...,z_{K}-k_{K}\right) \right)
=0,  \label{diciannove_K}
\end{eqnarray}%
the sum $\left( \ast \right) $ being, as usual, extended to all $\left(
n-1,n-u,\mathbf{z}_{K}\right) $-coherent vectors $\left(
k_{1},...,k_{K-1}\right) .$ Substituting (\ref{labbbase}) and (\ref%
{diciassette_K}) in (\ref{diciotto_K}), one has that (\ref{diciannove_K}) is
true if, and only if, all $\mathbf{m}_{K-2}\in \bigcup\limits_{a=0}^{n}%
\mathcal{N}\left( a,K-2\right) ,$ for the following quantity equals $0$:%
\begin{eqnarray}
&&\frac{\mathbb{P}_{n}\left( 0,m_{1},...,m_{K-2}\right) }{\mathbb{P}%
_{n-1}\left( z_{1},...,z_{K-1}\right) }\sum_{\left( \ast \right) }\left(
-1\right) ^{k_{1}}\binom{n-u}{k_{1},k_{2},...,k_{K-1}}\binom{u-1}{%
z_{1}-k_{1},z_{2}-k_{2},...,z_{K-1}-k_{K-1}}\times  \notag \\
&&\times \sum_{\left( \ast \ast \right) }\left( -1\right) ^{q_{1}}%
\binom{u}{q_{1},...,q_{K-1}} \label{venti_K} \\
&&\qquad\quad \binom{k_{1}+q_{1}}{m_{1}-k_{2}-q_{2}\cdot \cdot
\cdot \text{ }m_{K-2}-k_{K-1}-q_{K-1}}_{\ast }\frac{\mathbb{P}_{n+u-1}\left(
z_{1}+q_{1},...,z_{K-1}+q_{K-1}\right) }{\mathbb{P}_{n}\left(
k_{1}+q_{1},...,z_{K-1}+k_{K-1}\right) },  \notag
\end{eqnarray}%
where the sum $\left( \ast \right) $ is extended to all $\left( n-1,n-u,%
\mathbf{z}_{K}\right) $-coherent vectors $\left( k_{1},...,k_{K-1}\right) $
and the sum $\left( \ast \ast \right) $ is extended to all $\mathbf{q}%
_{K-1}=\left( q_{1},...,q_{K-1}\right) \in \bigcup\limits_{a=0}^{u}\mathcal{N%
}\left( a,K-1\right) .$ Note that%
\begin{eqnarray}
&& \frac{\mathbb{P}_{n+u-1}\left( z_{1}+q_{1},...,z_{K-1}+q_{K-1}\right) }{%
\mathbb{P}_{n}\left( k_{1}+q_{1},...,z_{K-1}+k_{K-1}\right) } \label{ventuno_K}\\
&& =\frac{1}{%
\binom{u-1}{z_{1}-k_{1},z_{2}-k_{2},...,z_{K-1}-k_{K-1}}}\mathbb{P}%
_{n+u-1}^{n}\left( z_{1}+q_{1},...,z_{K-1}+q_{K-1}\text{ }|\text{ }%
k_{1}+q_{1},...,z_{K-1}+k_{K-1}\right) , \notag
\end{eqnarray}%
where $\mathbb{P}_{n+u-1}^{n}\left( z_{1}+q_{1},...,z_{K-1}+q_{K-1}\text{ }|%
\text{ }k_{1}+q_{1},...,z_{K-1}+k_{K-1}\right) $ denotes the conditional
probability that the vector $\mathbf{X}_{\left[ n+u-1\right] }$ contains
exactly $z_{p}+q_{p}$ coordinates equal to $d_{p},$ $p=1,...,K-1,$ given
that the subvector $\mathbf{X}_{\left[ n\right] }$ contains exactly $%
k_{p}+q_{p}$ coordinates equal to $d_{p},$ $p=1,...,K-1.$ Plugging (\ref%
{ventuno_K}) into (\ref{venti_K}), one deduces immediately the announced characterization of
weak independence.

\begin{theorem}
\label{omaraus_K}Let $\mathbf{X}_{\left[ 1,\infty \right) }$ be an infinite
sequence of exchangeable $D$-valued random variables satisfying (\ref{for:nd}). For the sequence to be Hoeffding-decomposable, it is necessary and
sufficient that, for every $n\geq 2,$ every $u=2,...,n$, every $\mathbf{z}%
_{K}=\left( z_{1},...,z_{K}\right) \in \mathcal{N}\left( n-1,K\right) $ and
every $\mathbf{m}_{K-2}\in \bigcup\limits_{a=0}^{n}\mathcal{N}\left(
a,K-2\right) ,$ the following quantity equals $0$:%
\begin{eqnarray}
&&\sum_{\left( \ast \right) }\left( -1\right) ^{k_{1}}\binom{n-u}{%
k_{1},k_{2},...,k_{K-1}}\times  \label{ventidue_K} \\
&&\qquad \times \sum_{\left( \ast \ast \right) }\left( -1\right) ^{q_{1}}%
\binom{u}{q_{1},...,q_{K-1}}\binom{k_{1}+q_{1}}{m_{1}-k_{2}-q_{2}\cdot \cdot
\cdot \text{ }m_{K-2}-k_{K-1}-q_{K-1}}_{\ast }\times  \notag \\
&&  \qquad \qquad \qquad \times \mathbb{P}%
_{n+u-1}^{n}\left( z_{1}+q_{1},...,z_{K-1}+q_{K-1}\text{ }|\text{ }%
k_{1}+q_{1},...,q_{K-1}+k_{K-1}\right) ,\notag
\end{eqnarray}%
where the sum $\left( \ast \right) $ is extended to all $\left( n-1,n-u,%
\mathbf{z}_{K}\right) $-coherent vectors $\left( k_{1},...,k_{K-1}\right) ,$
and the sum $\left( \ast \ast \right) $ is extended to all $\mathbf{q}%
_{K-1}=\left( q_{1},...,q_{K-1}\right) \in \bigcup\limits_{a=0}^{u}\mathcal{N%
}\left( a,K-1\right) .$
\end{theorem}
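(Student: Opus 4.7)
The strategy is to reduce the characterization of Hoeffding decomposability, step by step, to the explicit combinatorial identity (\ref{ventidue_K}), by leveraging the two structural lemmas of this section together with Theorem \ref{t:eldkp}(I).

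First, under (\ref{for:nd}) I would verify that the non-triviality condition (\ref{theorem0}) is met (this follows from Proposition 6 of El-Dakkak and Peccati $[2008]$, since all Hoeffding spaces are non-degenerate when all atoms of $\mathbf{X}_{[n]}$ have strictly positive probability). Consequently, Theorem \ref{t:eldkp}(I) applies, and Hoeffding decomposability is equivalent to weak independence. Unfolding the latter, for every $n\geq 2$ each kernel $\varphi_n \in \Xi_n(\mathbf{X}_{[1,\infty)})$ must satisfy $\widetilde{[\varphi_n]}_{n,n-1}^{(n-u)}(\mathbf{X}_{[u+1,u+n-1]})=0$ for all $u=2,\ldots,n$. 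Since Lemma \ref{lemmozzo} supplies an explicit basis $\{\varphi_n^{\mathbf{m}_{K-2}} : \mathbf{m}_{K-2} \in \bigcup_{a=0}^n \mathcal{N}(a,K-2)\}$ of $\Xi_n$, by linearity it is enough to check the condition on each basis vector separately.

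The core of the argument then consists in three substeps. (i) Apply Lemma \ref{lemma3_K} with $m=n-1$ and $v=n-u$ to the partially-symmetric function $[\varphi_n^{\mathbf{m}_{K-2}}]_{n,n-1}^{(n-u)}$: this converts the vanishing of its symmetrization into the family of identities (\ref{diciotto_K}), indexed by $\mathbf{z}_K \in \mathcal{N}(n-1,K)$ and summed over $(n-1,n-u,\mathbf{z}_K)$-coherent vectors $(k_1,\ldots,k_{K-1})$. (ii) Compute $[\varphi_n^{\mathbf{m}_{K-2}}]_{n,n-1}^{(n-u)}$ in closed form by expanding the conditional expectation as a multinomially weighted sum over $\mathbf{q}_{K-1} \in \bigcup_{a=0}^u \mathcal{N}(a,K-1)$, encoding how many of the $u$ unobserved coordinates hit each colour; this yields formula (\ref{diciassette_K}). (iii) Plug the explicit expression (\ref{labbbase}) for the basis element into this formula, producing the additional factors $(-1)^{k_1+q_1}$, $\binom{k_1+q_1}{m_1-k_2-q_2,\ldots,m_{K-2}-k_{K-1}-q_{K-1}}_\ast$ and the probability ratio $\mathbb{P}_n(0,\mathbf{m}_{K-2})/\mathbb{P}_n(k_1+q_1,\ldots,k_{K-1}+q_{K-1})$.

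Finally, the common positive factor $\mathbb{P}_n(0,\mathbf{m}_{K-2})/\mathbb{P}_{n-1}(z_1,\ldots,z_{K-1})$ (non-zero by (\ref{for:nd})) can be divided out of the whole identity, and the ratio $\mathbb{P}_{n+u-1}(\cdot)/\mathbb{P}_n(\cdot)$ is rewritten through (\ref{ventuno_K}) as an inverse multinomial coefficient times the conditional probability $\mathbb{P}_{n+u-1}^n(\cdot\,|\,\cdot)$; this inverse factor cancels exactly against the multinomial $\binom{u-1}{z_1-k_1,\ldots,z_{K-1}-k_{K-1}}$ contributed by Lemma \ref{lemma3_K}, producing (\ref{ventidue_K}) verbatim. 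Since every step is reversible, both directions of the equivalence are established simultaneously. The main obstacle is purely bookkeeping: tracking four interlocking multi-indices ($\mathbf{z}_K$, $\mathbf{m}_{K-2}$, $\mathbf{k}_{K-1}$, $\mathbf{q}_{K-1}$) along with their coherence ranges, keeping the two independent sign sources $(-1)^{k_1}$ and $(-1)^{q_1}$ distinct (they arise respectively from the basis formula (\ref{labbbase}) and from its shift by $q_1$ in the conditional expectation expansion), and verifying that the $\{\cdot\}_\ast$ truncation correctly governs the boundary cases in the inner binomial coefficient.
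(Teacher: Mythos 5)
Your proposal is correct and follows essentially the same route as the paper: reduce Hoeffding decomposability to weak independence via Theorem \ref{t:eldkp}(I) (with (\ref{theorem0}) secured by Proposition 6 of El-Dakkak and Peccati [2008] under (\ref{for:nd})), test on the basis of Lemma \ref{lemmozzo}, apply Lemma \ref{lemma3_K} with $m=n-1$, $v=n-u$ to the conditional expectations (\ref{diciassette_K}), and simplify through (\ref{ventuno_K}) after dividing out the positive factor $\mathbb{P}_{n}\left(0,m_{1},\ldots,m_{K-2}\right)/\mathbb{P}_{n-1}\left(z_{1},\ldots,z_{K-1}\right)$. The only minor imprecision is that the two signs $(-1)^{k_{1}}$ and $(-1)^{q_{1}}$ both originate from the single factor $(-1)^{i_{1}}$ in (\ref{labbbase}) evaluated at $i_{1}=k_{1}+q_{1}$, rather than from two distinct sources, but this does not affect the argument.
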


In the next section, we will use the content of Theorem \ref{omaraus_K} in the case $K=3$.

\section{Proof of Theorem \ref{t:main}}\label{s:proof}

We start by stating a result that is easily deduced from the proof of the Theorem 1 in El-Dakkak and Peccati $[2008]$.
\begin{lemma}
\label{sommedentro}
For all $\pi,\nu>0$, all $n\geq 2$, all $u=2,\ldots,n$, all $z=0,\ldots,n-1$ and all $k\in\{\max\{0,z-(u-1)\},\ldots,\min\{z,n-u\}\}\},$
$$\sum_{q=0}^{u}(-1)^{q}\binom{u}{q}\frac{B(\pi+z+q,\nu+n+u-1-z-q)}{B(\pi+k+q,\nu+n-k-q)}=0.$$
\end{lemma}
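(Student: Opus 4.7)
The strategy is to convert the ratio of Beta functions into a polynomial in $q$ whose degree is strictly less than $u$, and then apply the classical finite-difference identity.

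First, I would rewrite each Beta function through the Gamma function, obtaining
\[
\frac{B(\pi+z+q,\nu+n+u-1-z-q)}{B(\pi+k+q,\nu+n-k-q)} = \frac{\Gamma(\pi+\nu+n)}{\Gamma(\pi+\nu+n+u-1)}\cdot\frac{\Gamma(\pi+z+q)}{\Gamma(\pi+k+q)}\cdot\frac{\Gamma(\nu+n+u-1-z-q)}{\Gamma(\nu+n-k-q)}.
\]
The leading factor is independent of $q$, so it can be pulled out of the sum over $q$. It remains to analyze the two Gamma-ratios in the variable $q$. The constraints $\max\{0,z-(u-1)\}\leq k\leq \min\{z,n-u\}$ imply in particular that $0\leq z-k\leq u-1$, so both ratios shift the Gamma argument by a nonnegative integer.

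Next, I would use the Pochhammer (rising factorial) representation to write
\[
\frac{\Gamma(\pi+z+q)}{\Gamma(\pi+k+q)}=\prod_{j=0}^{z-k-1}(\pi+k+j+q),\qquad \frac{\Gamma(\nu+n+u-1-z-q)}{\Gamma(\nu+n-k-q)}=\prod_{i=0}^{u-2-(z-k)}(\nu+n-k+i-q),
\]
with the usual convention that an empty product equals $1$. The first factor is a polynomial in $q$ of degree $z-k$; the second is a polynomial in $q$ of degree $u-1-(z-k)$ (both exponents are nonnegative by the constraints on $k$). Their product is therefore a polynomial $P(q)$ of degree exactly $u-1$.

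Finally, I would invoke the standard finite-difference identity
\[
\sum_{q=0}^{u}(-1)^{q}\binom{u}{q}P(q)=(-1)^{u}\,\Delta^{u}P(0)=0 \qquad \text{whenever } \deg P<u,
\]
which applies here since $\deg P=u-1<u$. This kills the sum and yields the claim.

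\textbf{Expected difficulty.} The computation is essentially routine; the only genuine point of care is verifying that the exponents $z-k$ and $u-1-(z-k)$ in the two Pochhammer products are both nonnegative — this is exactly what the range $\max\{0,z-(u-1)\}\leq k\leq\min\{z,n-u\}$ is designed to guarantee, and once this is checked, the degree count and the finite-difference vanishing close the proof immediately. Since this lemma is attributed to the proof of Theorem 1 in El-Dakkak and Peccati [2008], the argument reproduces a combinatorial vanishing that was already implicit in the binary case treatment.
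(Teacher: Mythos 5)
Your proof is correct: the reduction of the Beta-function ratio to a $q$-independent Gamma prefactor times a polynomial in $q$ of degree $(z-k)+(u-1-(z-k))=u-1$, followed by the vanishing of the $u$-th finite difference, is exactly the mechanism at work, and your verification that the constraints on $k$ make both Pochhammer exponents nonnegative is the only point requiring care. The paper itself gives no proof of this lemma, deferring entirely to the proof of Theorem 1 in El-Dakkak and Peccati [2008], where the same polynomial-degree argument is implicit in the two-color analysis; your write-up simply makes that argument explicit and self-contained.
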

To keep the notation as understandable as possible, we shall restrict ourselves to the case $K=3$. The proof for general $K$ carries out exactly in the same way. Let $\mathbf{X}_{[1,\infty)}$ be a $HLS_{3}(\pi,\nu,\alpha)$- exchangeable sequence with values in $D=\{d_{1},d_{2},d_{3}\}$, where $\pi,\nu>0$ and $0<\alpha<1$. By (\ref{probHLS}), the following facts are in order : (a) $\mathbf{X}_{[1,\infty)}$ satisfies (\ref{for:nd}), (b) $\mathbf{X}_{[1,\infty)}$ is neither i.i.d. nor a $K$-color P\'{o}lya sequence and (c) if $\mathbf{x}_{n}\in D^{n}$ contains exactly $z_{1}$ coordinates equal to $d_{1}$ and $z_{2}$ coordinates equal to $d_{2}$, then, in the language of the present paper,
\begin{equation}
\label{probHLS3}
\mathbb{P}(\mathbf{X}_{[n]}=\mathbf{x}_{n})=\mathbb{P}_{n}(z_{1},z_{2})=\alpha^{z_{2}}(1-\alpha)^{n-z_{1}-z_{2}}\frac{B(z_{1}+\pi,n-z_{1}+\nu)}{B(\pi,\nu)}.
\end{equation}
Recall that, by Theorem \ref{omaraus_K}, an exchangeable sequence with values in $D=\{d_{1},d_{2},d_{3}\}$ is Hoeffding- decomposable if and only if, for all $n\geq 2$, all $u=2,\ldots,n$, all $m=0,\ldots,n$ and all $(z_{1},z_{2})\in S(z_{1},z_{2}):=\{(z_{1},z_{2})\in\{0,\ldots,n-1\}^{2}\colon z_{1}+z_{2}\leq n-1\}$, one has

\begin{eqnarray*}
0=\sum_{k_{1}\in A(z_{1},u)}\sum_{k_{2}\in A_{z_{1},k_{1}}(z_{2},u)}\!\!\!\!\!&& (-1)^{k_{1}}\binom{n-u}{k_{1}\hspace{0.2cm}k_{2}}\binom{u-1}{z_{1}-k_{1}\hspace{0.2cm}z_{2}-k_{2}}\times\\
&&\times\sum_{q_{1}=0}^{u}\sum_{q_{2}=0}^{u-q_{1}}(-1)^{q_{1}}\binom{u}{q_{1}\hspace{0.2cm}q_{2}}\binom{k_{1}+q_{1}}{m-k_{2}-q_{2}}_{\ast}\frac{\mathbb{P}_{n+u-1}(z_{1}+q_{1},z_{2}+q_{2})}{\mathbb{P}_{n}(k_{1}+q_{1},k_{2}+q_{2})},
\end{eqnarray*}
where
$$A(z_{1},u)=\{\max\{0,z_{1}-(u-1)\},\ldots,\min\{z_{1},n-u\}\},$$
and
$$A_{z_{1},k_{1}}(z_{2},u)=\{\max\{0,z_{2}-(u-1)-(z_{1}-k_{1})\},\ldots,\min\{z_{1},n-u-k_{1}\}\},$$
and where the notation $\binom{a}{b}_{\ast}$ is defined in (\ref{binomast}).

\noindent It follows that $\mathbf{X}_{[1,\infty)}$ is Hoeffding-decomposable if and only if for all $n\geq 2$, all $u=2,\ldots,n$, all $m=0,\ldots,n$ and all $(z_{1},z_{2})\in S(z_{1},z_{2}):=\{(z_{1},z_{2})\in\{0,\ldots,n-1\}^{2}\colon z_{1}+z_{2}\leq n-1\}$, one has

\begin{eqnarray*}
0&=&\sum_{k_{1}\in A(z_{1},u)}\sum_{k_{2}\in A_{z_{1},k_{1}}(z_{2},u)}(-1)^{k_{1}}\binom{n-u}{k_{1}\hspace{0.2cm}k_{2}}\binom{u-1}{z_{1}-k_{1}\hspace{0.2cm}z_{2}-k_{2}}\times\\
&&\times C\sum_{q_{1}=0}^{u}\sum_{q_{2}=0}^{u-q_{1}}(-1)^{q_{1}}\binom{u}{q_{1}\hspace{0.2cm}q_{2}}\binom{k_{1}+q_{1}}{m-k_{2}-q_{2}}_{\ast}\frac{B(\pi+z_{1}+q_{1},\nu+n+u-1-z_{1}-q_{1})}{B(\pi+k_{1}+q_{1},\nu+n-k_{1}-q_{1})},
\end{eqnarray*}
where
$$C=C(\alpha,u,z_{1},z_{2},k_{1},k_{2})=\alpha^{z_{2}-k_{2}}(1-\alpha)^{(u-1)-(z_{1}-k_{1})-(z_{2}-k_{2})},$$
clearly does not depend on $q_{1}$ and $q_{2}$.

\noindent We will, in fact, show an even stronger fact. More precisely, we will show that, for all $n\geq2$, all $u=2,\ldots,n$, all $m=0,\ldots,n$, all $(z_{1},z_{2})\in S(z_{1},z_{2}):=\{(z_{1},z_{2})\in\{0,\ldots,n-1\}^{2}\colon z_{1}+z_{2}\leq n-1\}$, all $k_{1}\in A(z_{1},u)$ and all $k_{2}\in A_{z_{1},k_{1}}(z_{2},u)$, one has
$$\sigma(n,u,m,z_{1},k_{1},k_{2})=0,$$
where

\begin{eqnarray*}
&& \sigma(n,u,m,z_{1},k_{1},k_{2})\\
&& :=\sum_{q_{1}=0}^{u}\sum_{q_{2}=0}^{u-q_{1}}(-1)^{q_{1}}\binom{u}{q_{1}\hspace{0.2cm}q_{2}}\binom{k_{1}+q_{1}}{m-k_{2}-q_{2}}_{\ast}\frac{B(\pi+z_{1}+q_{1},\nu+n+u-1-z_{1}-q_{1})}{B(\pi+k_{1}+q_{1},\nu+n-k_{1}-q_{1})}.
\end{eqnarray*}

\noindent Towards this aim, we first show that

\begin{eqnarray}
 \label{semplificata}
&& \sigma(n,u,m,z_{1},k_{1},k_{2}) =\\
&&=\sum_{q_{1}=0}^{u}(-1)^{q_{1}}\binom{u}{q_{1}}\frac{B(\pi+z_{1}+q_{1},\nu+n+u-1-z_{1}-q_{1})}{B(\pi+k_{1}+q_{1},\nu+n-k_{1}-q_{1})}\sum_{q_{2}=0}^{u-q_{1}}\binom{u-q_{1}}{q_{2}}\binom{k_{1}+q_{1}}{m-k_{2}-q_{2}}_{\ast}\notag\\
&&=\binom{k_{1}+u}{m-k_{2}}_{\ast}\sum_{q_{1}=0}^{u}(-1)^{q_{1}}\binom{u}{q_{1}}\frac{B(\pi+z_{1}+q_{1},\nu+n+u-1-z_{1}-q_{1})}{B(\pi+k_{1}+q_{1},\nu+n-k_{1}-q_{1})}.\notag
\end{eqnarray}
In other words, we show that
\begin{equation}
\label{sorpresa}
\tilde{\sigma}_{q_{1}}(m,u,k_{1},k_{2})=\sum_{q_{2}=0}^{u-q_{1}}\binom{u-q_{1}}{q_{2}}\binom{k_{1}+q_{1}}{m-k_{2}-q_{2}}_{\ast}=\binom{k_{1}+u}{m-k_{2}}_{\ast},
\end{equation}
i.e. that $\tilde{\sigma}_{q_{1}}(m,u,k_{1},k_{2})$ does, actually, not depend on $q_{1}$. For reading convenience, set $u-q_{1}=i$ and $m-k_{2}=j$ and rewrite $\tilde{\sigma}_{q_{1}}(m,u,k_{1},k_{2})$ as
$$\tilde{\sigma}_{i}(j,u,k_{1})=\sum_{q_{2}=0}^{i}\binom{i}{q_{2}}\binom{k_{1}+u-i}{j-q_{2}}_{\ast}.$$
To see that
$$\tilde{\sigma}_{i}(j,u,k_{1})=\binom{k_{1}+u}{j}_{\ast},$$
start by fixing $i\in\{0,\ldots,u\}$. If $j<0$ (i.e. if $m<k_{2}$), then the equality is trivial. If $0\leq j\leq i$, then, by definition of $\binom{k_{1}+q_{1}}{j-q_{2}}_{\ast}$, we have
$$\tilde{\sigma}_{i}(j,u,k_{1})=\tilde{\sigma}_{j}(j,u,k_{1}),$$
and the result follows by a direct application of the van der Monde formula. If $j\geq i$, then a direct application of the classical Pascal's triangle gives

\begin{eqnarray*}
\binom{k_{1}+u}{j}_{\ast}&=&\binom{k_{1}+u-1}{j}_{\ast}+\binom{k+u-1}{j-1}_{\ast}\\
&=&\binom{k_{1}+u-2}{j}_{\ast}+2\binom{k+u-2}{j-1}_{\ast}+\binom{k_{1}+u-2}{j-2}_{\ast}\\
&=&\binom{k_{1}+u-3}{j}_{\ast}+3\binom{k+u-3}{j-1}_{\ast}+3\binom{k_{1}+u-3}{j-2}_{\ast}+\binom{k_{1}+u-3}{j-3}_{\ast}\\
&=&\cdots\\
&=&\tilde{\sigma}_{i}(j,u,k_{1}).
\end{eqnarray*}

\noindent Now that (\ref{sorpresa}) is in order, fix, arbitrarily, $n,u,z_{1},k_{1}$. For all $k_{2}$ and $m$ such that $m\in\{0,\ldots,k_{2}-1\}\cup\{k_{1}+u+1,\ldots,n\}$, we have, by definition of $\binom{k_{1}+u}{m-k_{2}}_{\ast}$, that $\sigma(n,u,m,z_{1},k_{1},k_{2})=0$ as desired. The fact that this is still the case for all $m\in\{k_{2},\ldots,k_{1}+u\}$, follows from (\ref{semplificata}) and Lemma \ref{sommedentro}. The proof of Theorem \ref{t:main} is concluded.

\smallskip

\noindent \textbf{Acknowledgements.}
The authors are especially grateful to Eugenio Regazzini for a number of illuminating discussions. I. Pr\"unster is also affiliated to Collegio Carlo Alberto and partially supported by MIUR, Grant 2008MK3AFZ, and Regione Piemonte.

\end{document}